\DeclareMathOperator{\sinc}{sinc}
\newcommand{\IID}{\stackrel{IID}{\sim}} 
\newcommand{\inv}{^{-1}} 
\newcommand{\sminus}{\backslash} 
\newcommand{\argmin}{\mathop{\arg\!\min}} 
\newcommand{\B}{\mathcal{B}} 
\newcommand{\E}{\mathop{\mathbb{E}}} 
\newcommand{\F}{\mathcal{F}} 
\renewcommand{\H}{\mathcal{H}} 
\newcommand{\R}{\mathbb{R}} 
\renewcommand{\L}{\mathcal{L}} 
\newcommand{\N}{\mathbb{N}} 
\renewcommand{\P}{\mathcal{P}} 
\newcommand{\X}{\mathcal{X}} 
\newcommand{\W}{\mathcal{W}} 
\newcommand{\Z}{\mathcal{Z}} 
\newcommand{\norm}[1]{\left\lVert#1\right\rVert} 
\renewcommand{\hat}{\widehat}
\renewcommand{\tilde}{\widetilde}
\newtheorem{theorem}{Theorem}
\newtheorem{lemma}[theorem]{Lemma}
\newtheorem{corollary}[theorem]{Corollary}
\theoremstyle{definition}
\newtheorem{example}[theorem]{Example}
\newtheorem{definition}[theorem]{Definition}
\theoremstyle{remark}
\newenvironment{customthm}[1]
  {\innercustomthm}
  {\endinnercustomthm}
\newenvironment{customexp}[1]
  {\innercustomexp}
  {\endinnercustomexp}
\title{Nonparametric Density Estimation\\ with Adversarial Losses}
\author{
  Shashank Singh$^{1,2,*}$ \quad
  Ananya Uppal$^3$ \quad
  Boyue Li$^4$ \quad\\
  \textbf{Chun-Liang Li}$^1$ \quad
  \textbf{Manzil Zaheer}$^1$ \quad
  \textbf{Barnab\'as P\'oczos}$^1$\\
  $^1$Machine Learning Department \quad
  $^2$Department of Statistics \& Data Science\\
  $^3$Department of Mathematical Sciences \quad
  $^4$Language Technologies Institute\\
  Carnegie Mellon University\\
  $^*$Corresponding Author: \texttt{sss1@cs.cmu.edu}
}
\begin{document}

\maketitle

\begin{abstract}
We study minimax convergence rates of nonparametric density estimation under a large class of loss functions called ``adversarial losses'', which, besides classical $\L^p$ losses, includes maximum mean discrepancy (MMD), Wasserstein distance, and total variation distance. These losses are closely related to the losses encoded by discriminator networks in generative adversarial networks (GANs). In a general framework, we study how the choice of loss and the assumed smoothness of the underlying density together determine the minimax rate. We also discuss implications for training GANs based on deep ReLU networks, and more general connections to learning implicit generative models in a minimax statistical sense.
\end{abstract}

\section{Introduction}

Generative modeling, that is, modeling the distribution from which data are drawn, is a central task in machine learning and statistics. Often, prior information is insufficient to guess the form of the data distribution. In statistics, generative modeling in these settings is usually studied from the perspective of nonparametric density estimation, in which histogram, kernel, orthogonal series, and nearest-neighbor methods are popular approaches with well-understood statistical properties~\citep{wasserman2006nonparametric,tsybakov2009introduction,efromovich2010orthogonal,biau2015lectures}.

Recently, machine learning has made significant empirical progress in generative modeling, using such tools as generative adversarial networks (GANs) and variational autoencoders (VAEs). Computationally, these methods are quite distinct from classical density estimators; they usually rely on deep neural networks, fit by black-box optimization, rather than a mathematically prescribed smoothing operator, such as convolution with a kernel or projection onto a finite-dimensional subspace.

Ignoring the implementation of these models, from the perspective of statistical analysis, these recent methods have at least two main differences from classical density estimators. First, they are \textit{implicit}, rather than \textit{explicit} (or \textit{prescriptive}) generative models~\citep{diggle1984monte,mohamed2016learning}; that is, rather than an estimate of the probability of a set or the density at a point, they return novel samples from the data distribution. Second, in many recent models, loss is measured not with $\L^p$ distances (as is conventional in nonparametric statistics~\citep{wasserman2006nonparametric,tsybakov2009introduction}), but rather with weaker losses, such as
\begin{equation}
d_{\F_D}(P, Q)
  = \sup_{f \in \F_D} \left| \E_{X \sim P} [f(X)] - \E_{X \sim Q} [f(X)] \right|,
\label{eq:adversarial_loss}
\end{equation}
where $\F_D$ is a \textit{discriminator class} of bounded, Borel-measurable functions, and $P$ and $Q$ lie in a \textit{generator class} $\F_G$ of Borel probability measures on a sample space $\X$. Specifically, GANs often use losses of this form because
\eqref{eq:adversarial_loss} can be approximated by a discriminator neural network.

This paper attempts to help bridge the gap between traditional nonparametric statistics and these recent advances by studying these two differences from a statistical minimax perspective. Specifically, under traditional statistical smoothness assumptions, we identify (i.e., prove matching upper and lower bounds on) minimax convergence rates for density estimation under several losses of the form~\eqref{eq:adversarial_loss}. We also discuss some consequences this has for particular neural network implementations of GANs based on these losses. Finally, we study connections between minimax rates for explicit and implicit generative modeling, under a plausible notion of risk for implicit generative models.

\subsection{Adversarial Losses}
The quantity~\eqref{eq:adversarial_loss} has been extensively studied, in the case that $\F_D$ is a reproducing kernel Hilbert space (RKHS) under the name \textit{maximum mean discrepancy} (MMD; \citep{gretton2012kernelTwoSample,tolstikhin2017minimax}),
and, in a wider context under the name \textit{integral probability metric} (IPM; \citep{muller1997IPMs,sriperumbudur2010IPMs,sriperumbudur2012IPMs,bottou2017geometrical}). \citep{arora2017generalizationInGANs} also called~\eqref{eq:adversarial_loss} the \textit{$\F_D$-distance}, or, when $\F_D$ is a family of functions that can be implemented by a neural network, the \textit{neural network distance}.
We settled on the name ``adversarial loss'' because, without assuming any structure on $\F_D$, this matches the intuition of the expression~\eqref{eq:adversarial_loss}, namely that of an adversary selecting the most distinguishing linear projection $f \in \F_D$ between the true density $P$ and our estimate $\hat P$ (e.g., by the discriminator network in a GAN).

One can check that $d_{\F_D} : \F_G \times \F_G \to [0,\infty]$ is a pseudometric (i.e., it is non-negative and satisfies the triangle inequality, and $d_{\F_D}(P,Q) > 0 \Rightarrow P \neq Q$, although $d_{\F_D}(P, Q) = 0 \not\Rightarrow P = Q$ unless $\F_D$ is sufficiently rich).
Many popular (pseudo)metrics between probability distributions, including $\L^p$~\citep{wasserman2006nonparametric,tsybakov2009introduction}, Sobolev~\citep{leoni2017first,mroueh2017sobolevGANs}, maximum mean discrepancy (MMD; \citep{tolstikhin2017minimax})/energy~\citep{szekely2007distances,ramdas2017wasserstein}, total variation~\citep{villani2008optimal}, ($1$-)Wasserstein/Kantorovich-Rubinstein~\citep{kantorovich1958space,villani2008optimal}, Kolmogorov-Smirnov~\citep{kolmogorov1933sulla,smirnov1948table}, and Dudley~\citep{dudley1972speeds,abbasnejad2018deep} metrics can be written in this form, for appropriate choices of $\F_D$.

The \textbf{main contribution of this paper} is a statistical analysis of the problem of estimating a distribution $P$ from $n$ IID observations using the loss $d_{\F_D}$, in a minimax sense over $P \in \F_G$, for fairly general nonparametric smoothness classes $\F_D$ and $\F_G$. General upper and lower bounds are given in terms of decay rates of coefficients of functions in terms of an (arbitrary) orthonormal basis of $\L^2$ (including, e.g., Fourier or wavelet bases); note that this does \textit{not} require $\F_D$ or $\F_G$ to have any inner product structure, only that $\F_D \subseteq \L^1$. We also discuss some consequences for density estimators based on neural networks (such as GANs), and consequences for the closely related problem of implicit generative modeling (i.e., of generating novel samples from a target distribution, rather than estimating the distribution itself), in terms of which GANs and VAEs are usually cast.

{\bf Paper Organization:}
Section~\ref{sec:notation} provides our formal problem statement and required notation.
Section \ref{sec:related_work} discusses related work on nonparametric density estimation, with further discussion of the theory of GANs provided in the Appendix.
Sections~\ref{sec:upper_bounds} and \ref{sec:lower_bounds} contain our main theoretical upper and lower bound results, respectively. Section~\ref{sec:examples} develops our general results from Sections~\ref{sec:upper_bounds} and \ref{sec:lower_bounds} into concrete minimax convergence rates for some important special cases.
Section~\ref{sec:neural_network_GANs} uses our theoretical results to upper bound the error of perfectly optimized GANs.
Section~\ref{sec:density_estimation_versus_sampling} establishes some theoretical relationships between the convergence of optimal density estimators and optimal implicit generative models.
The Appendix provides proofs of our theoretical results, further applications, further discussion of related and future work, and experiments on simulated data that support our theoretical results.

\section{Problem Statement and Notation}
\label{sec:notation}

We now provide a formal statement of the problem studied in this paper in a very general setting, and then define notation required for our specific results.

{\bf Formal Problem Statement:} Let $P \in \F_G$ be an unknown probability measure on a sample space $\X$, from which we observe $n$ IID samples $X_{1:n} = X_1,...,X_n \IID P$.
In this paper, we are interested in using the samples $X_{1:n}$ to estimate the measure $P$, with error measured using the adversarial loss $d_{\F_D}$. Specifically, for various choices of spaces $\F_D$ and $\F_G$, we seek to bound the minimax rate
\[M(\F_D,\F_G)
  := \inf_{\hat P} \sup_{P \in \F_G} \E_{X_{1:n}} \left[ d_{\F_D} \left( P, \hat P(X_{1:n}) \right) \right]\]
of estimating distributions assumed to lie in a class $\F_G$, where the infimum is taken over all estimators $\hat P$ (i.e., all (potentially randomized) functions $\hat P : \X^n \to \F_G$). We will discuss both the case when $\F_G$ is known \textit{a priori} and the \textit{adaptive} case when it is not.

\subsection{Notation}
For a non-negative integer $n$, we use $[n] := \{1,2,...,n\}$ to denote the set of positive integers at most $n$.
For sequences $\{a_n\}_{n \in \N}$ and $\{b_n\}_{n \in \N}$ of non-negative reals, $a_n \lesssim b_n$ and, similarly $b_n \gtrsim a_n$, indicate the existence of a constant $C > 0$ such that $\limsup_{n \to \infty} \frac{a_n}{b_n} \leq C$. $a_n \asymp b_n$ indicates $a_n \lesssim b_n \lesssim a_n$.
For functions $f : \mathbb{R}^d \to \R$, we write
\[\lim_{\|z\| \to \infty} f(z)
  := \sup_{\{z_n\}_{n \in \N} : \|z_n\| \to \infty}
     \lim_{n \to \infty} f(z_n),\]
where the supremum is taken over all diverging $\R^d$-valued sequences.
Note that, by equivalence of finite-dimensional norms, the exact choice of the norm $\|\cdot\|$ does not matter here. We will also require summations of the form $\sum_{z \in \Z} f(z)$ in cases where $\Z$ is a (potentially infinite) countable index set and $\{f(z)\}_{z \in \Z}$ is summable but not necessarily absolutely summable. Therefore, to ensure that the summation is well-defined, the order of summation will need to be specified, depending on the application (as in, e.g., Section~\ref{sec:examples}).

Fix the sample space $\X = [0,1]^d$ to be the $d$-dimensional unit cube, over which $\lambda$ denotes the usual Lebesgue measure. Given a measurable function $f : \X \to \R$, let, for any Borel measure $\mu$ on $\X$, $p \in [1,\infty]$, and $L > 0$, \[\|f\|_{\L_\mu^p} := \left( \int_\X |f|^p \, d\mu \right)^{1/p}
  \quad \text{ and } \quad
  \L_\mu^p(L) := \left\{ f : \X \to \R \;\middle|\; \|f\|_{\L_\mu^p} < L \right\}\]
(taking the appropriate limit if $p = \infty$) 
denote the Lebesgue norm and ball of radius $L$, respectively.

Fix an orthonormal basis $\B = \{\phi_z\}_{z \in \Z}$ of $\L_\lambda^2$ indexed by a countable family $\Z$. To allow probability measures $P$ without densities (i.e., $P \not\ll \mu$), we assume each basis element $\phi_z : \X \to \R$ is a bounded function, so that $\tilde P_z := \E_{X \sim P} \left[ \phi_z(X) \right]$ is well-defined. For constants $L > 0$ and $p \geq 1$ and real-valued net $\{a_z\}_{z \in \Z}$, our results pertain to generalized ellipses of the form
\[\H_{p,a}(L)
  = \left\{ f \in \L^1(\X) : \left( \sum_{z \in \Z} a_z^p |\tilde f_z|^p \right)^{1/p}
  \leq L \right\}.\]
(where $\tilde f_z := \int_\X f \phi_z \, d\mu$ is the $z^{th}$ coefficient of $f$ in the basis $\B$).
We sometimes omit dependence on $L$ (e.g., $\H_{p,a} = \H_{p,a}(L)$) when its value does not matter (e.g., when discussing \emph{rates} of convergence).

A particular case of interest is the scale of the Sobolev spaces defined for $s,L \geq 0$ and $p \geq 1$ by
\[\W^{s,p}(L)
  = \left\{ f \in \L^1(\X) : \left( \sum_{z \in \Z} |z|^{sp} |\tilde f_z|^p \right)^{1/p}
  \leq L \right\}.\]
For example, when $\B$ is the standard Fourier basis and $s$ is an integer, for a constant factor $c$ depending only on $s$ and the dimension $d$,
\[\W^{s,p}(cL)
  := \left\{ f \in \L_\lambda^p \middle| \left\| f^{(s)} \right\|_{\L_\lambda^p} < L \right\}\]
corresponds to the natural standard smoothness class of $\L_\lambda^p$ functions having $s^{th}$-order (weak) derivatives $f^{(s)}$ in $\L_\lambda^p(L)$~\citep{leoni2017first}).

\section{Related Work}
\label{sec:related_work}


Our results apply directly to many of the losses that have been used in GANs, including $1$-Wasserstein distance~\citep{arjovsky2017wassersteinGAN,gulrajani2017improved}, MMD~\citep{li2017mmd}, Sobolev distances~\citep{mroueh2017sobolevGANs}, and the Dudley metric~\citep{abbasnejad2018deep}. As discussed in the Appendix, slightly different assumptions are required to obtain results for the Jensen-Shannon divergence (used in the original GAN formulation of \citep{goodfellow2014GANs}) and other $f$-divergences~\citep{nowozin2016f}.

Given their generality, our results relate to many prior works on distribution estimation, including classical work in nonparametric statistics and empirical process theory, as well as more recent work studying Wasserstein distances and MMD. Here, we briefly survey known results for these problems. There have also been a few other statistical analyses of the GAN framework; due to space constraints, we discuss these works in the Appendix.

\textbf{$\L_\lambda^2$ distances:} Classical work on nonparametric statistics has typically focused on the problem of smooth density estimation under $\L_\lambda^2$ loss, corresponding the adversarial loss $d_{\F_D}$ with $\F_D = \L_\lambda^2(L_D)$ (the H\"older dual) of $\L^2$~\citep{wasserman2006nonparametric,tsybakov2009introduction}.
In this case, when $\F_G = \W^{t,2}(L_G)$ is a Sobolev class, then the minimax rate is typically $M(\F_D,\F_G) \asymp n^{-\frac{t}{2t + d}}$, matching the rates given by our main results.

\textbf{Maximum Mean Discrepancy (MMD):} When $\F_D$ is a reproducing kernel Hilbert space (RKHS), the adversarial loss $d_{\F_D}$ has been widely studied under the name \textit{maximum mean discrepancy (MMD)}~\citep{gretton2012kernelTwoSample,tolstikhin2017minimax}. When the RKHS kernel is translation-invariant, one can express $\F_D$ in the form $\H_{2,a}$, where $a$ is determined by the spectrum of the kernel, and so our analysis holds for MMD losses with translation-invariant kernels (see Example~\ref{ex:RKHS_MMD_loss}).
To the best of our knowledge, minimax rates for density estimation under MMD loss have not been established in general; our analysis suggests that density estimation under an MMD loss is essentially equivalent to the problem of estimating kernel mean embeddings studied in \citep{tolstikhin2017minimax}, as both amount to density estimation while ignoring bias, and both typically have a parametric $n^{-1/2}$ minimax rate.
Note that the related problems of estimating MMD itself, and of using it in statistical tests for homogeneity and dependence, have received extensive theoretical treatment~\citep{gretton2012kernelTwoSample,ramdas2015decreasing}.

\textbf{Wasserstein Distances:} When $\F_D = \W^{1,\infty}(L)$ is the class of $1$-Lipschitz functions, $d_{\F_D}$ is equivalent to the \textit{(order-$1$) Wasserstein} (also called \textit{earth-mover's} or \textit{Kantorovich-Rubinstein}) distance. In this case, when $\F_G$ contains all Borel measurable distributions on $\X$, minimax bounds have been established under very general conditions (essentially, when the sample space $\X$ is an arbitrary totally bounded metric space) in terms of covering numbers of $\X$~\citep{weed2017sharp,singh2018wasserstein,lei2018wasserstein}. In the particular case that $\X$ is a bounded subset of $\R^d$ of full dimension (i.e., having non-empty interior, comparable to the case $\X = [0,1]^d$ that we study here), these results imply a minimax rate of $M(\F_D,\F_G) = n^{-\min \left\{ \frac{1}{2}, \frac{1}{d}\right\}}$, matching our rates. Notably, these upper bounds are derived using the empirical distribution, which \textit{cannot} benefit from smoothness of the true distribution (see~\citep{weed2017sharp}). At the same time, it is obvious to generalize smoothing estimators to sample spaces that are not sufficiently nice subsets of $\R^d$.

\textbf{Sobolev IPMs:} The closest work to the present is \citep{liang2017well}, which we believe was the first work to analyze how convergence rates jointly depend on (Sobolev) smoothness restrictions on both $\F_D$ and $\F_G$. Specifically, for Sobolev spaces $\F_D = \W^{s,p}$ and $\F_G = \W^{t,q}$ with $p,q \geq 2$ (compare our Example~\ref{ex:Sobolev_Fourier}), they showed
\begin{equation}
n^{-\frac{s + t}{2t + d}}
  \lesssim M(\W^{s,2}, \W^{t,2})
  \lesssim  n^{-\frac{s + t}{2(s + t) + d}}.
  \label{ineq:liang_bounds}
\end{equation}
Our main results in Sections~\ref{sec:upper_bounds} and \ref{sec:lower_bounds} improve on this in two main ways. First, our results generalize to and are tight for many spaces besides Sobolev spaces. Examples include when $\F_D$ is a reproducing kernel Hilbert space (RKHS) with translation-invariant kernel, or when $\F_G$ is the class of all Borel probability measures. Our bounds also allow other (e.g., wavelet) estimators, whereas the bounds of \citep{liang2017well} are for the (uniformly $\L_\lambda^\infty$-bounded) Fourier basis.
Second, the lower and upper bounds in~\eqref{ineq:liang_bounds} diverge by a factor polynomial in $n$.
We tighten the upper bound to match the lower bound, identifying, for the first time, minimax rates for many problems of this form (e.g., $M(\W^{s,2}, \W^{t,2}) \asymp n^{-\frac{s + t}{2t + d}}$ in the Sobolev case above). Our analysis has several interesting implications:
\begin{enumerate}[nolistsep,leftmargin=2em]
\item
When $s > d/2$, the convergence becomes \textit{parametric}: $M(W^{s,2},\F_G) \asymp n^{-1/2}$, for \textit{any class of distributions $\F_G$.} This highlights that the loss $d_{\F_D}$ is quite weak for large $s$, and matches known minimax results for the Wasserstein case $s = 1$~\citep{canas2012learning,singh2018wasserstein}.
\item
Our upper bounds, as in \citep{liang2017well}, are for smoothing estimators (namely, the orthogonal series estimator~\ref{eq:estimator}). In contrast, previous analyses of Wasserstein loss focused on convergence of the (unsmoothed) empirical distribution $\hat P_E$ to the true distribution, which typically occurs at rate of $\asymp n^{-1/d} + n^{-1/2}$, where $d$ is the intrinsic dimension of the support of $P$~\citep{canas2012learning,weed2017sharp,singh2018wasserstein}. Moreover, if $\F_G$ includes all Borel probability measures, this rate is minimax optimal~\citep{singh2018wasserstein}. The loose upper bound of \citep{liang2017well} left open the questions of whether (when $s < d/2$) a very small amount ($t \in \left(0,\frac{2s^2}{d - 2s} \right]$) of smoothness improves the minimax rate and, more importantly, whether smoothed estimators are outperformed by $\hat P_E$ in this regime. Our results imply that, for $s < d/2$, the minimax rate strictly improves with smoothness $t$, and that, as long as the support of $P$ has full dimension, the smoothed estimator \textit{always} converges faster than $\hat P_E$. An important open problem is to simultaneously leverage when $P$ is smooth \textit{and} has support of low intrinsic dimension; many data (e.g., images) likely enjoy both these properties.
\item
\citep{liang2017well} suggested over-smoothing the estimate (the smoothing parameter $\zeta$ discussed in Equation~\eqref{eq:estimator} below was set to $\zeta \asymp n^{\frac{1}{2(s + t) + d}}$) compared to the case of $\L_\lambda^2$ loss, and hence it was not clear how to design estimators that adapt to unknown smoothness under losses $d_{W^{s,p}}$. We show that the optimal smoothing ($\zeta \asymp n^{\frac{1}{2t + d}}$) under $d_{W^{s,p}}$ loss is identical to that under $\L_\lambda^2$ loss, and we use this to design an adaptive estimator (see Corollary~\ref{corr:adaptive_upper_bound}).
\item
Our bounds imply improved performance bounds for optimized GANs, discussed in Section~\ref{sec:neural_network_GANs}.
\end{enumerate}

\section{Upper Bounds for Orthogonal Series Estimators}
\label{sec:upper_bounds}
This section gives upper bounds on the adversarial risk of the following density estimator. For any finite set $Z \subseteq \Z$, let $\hat P_Z$ be the truncated series estimate
\begin{equation}
\hat P_Z := \sum_{z \in Z} \hat P_z \phi_z,
  \quad \text{ where, for any } z \in \Z, \quad
  \hat P_z := \frac{1}{n} \sum_{i = 1}^n \phi_z(X_i).
  \label{eq:estimator}
\end{equation}
$Z$ is a tuning parameter that typically corresponds to a smoothing parameter; for example, when $\B$ is the Fourier basis and $Z = \{z \in \mathbb{Z}^d : \|z\|_\infty \leq \zeta\}$ for some $\zeta > 0$, $\hat P_Z$ is equivalent to a kernel density estimator using a $\sinc$ product kernel $K_h(x) = \prod_{j = 1}^d \frac{2}{h} \frac{\sin(2\pi x/h)}{2\pi x/h}$ with bandwidth $h = 1/\zeta$~\citep{owen2007signalProcessing}.

We now present our main upper bound on the minimax rate of density estimation under adversarial losses. The upper bound is given by the orthogonal series estimator given in Equation \eqref{eq:estimator}, but we expect kernel and other standard linear density estimators to converge at the same rate.

\begin{theorem}[Upper Bound]
Suppose that $\mu(\X)<\infty$ and there exist constants $L_D,L_G > 0$, real-valued nets $\{a_z\}_{z \in \Z}$, $\{b_z\}_{z \in \Z}$ such that $\F_D = \H_{p,a}(\X,L_D)$ and $\F_G = \H_{q,b}(\X,L_G)$, where $p,q \geq 1$. Let $p' = \frac{p}{p - 1}$ denote the H\"older conjugate of $p$.
Then, for any $P \in \F_G$,
\begin{equation}
\E_{X_{1:n}} \left[ d_{\F_D} \left( P, \hat P \right) \right]
  \leq L_D \frac{c_{p'}}{\sqrt{n}}
         \norm{\left\{\frac{\norm{\phi_z}_{\L_P^\infty}}{a_z}\right\}_{z \in Z}}_{p'}
    + L_D L_G \norm{
        \left\{
            \frac{1}{a_z b_z}
        \right\}_{z\in \Z\setminus Z}}_{\frac{1}{1-1/p-1/q}}
  \label{ineq:upper_bound}
\end{equation}
\label{thm:upper_bound}
\end{theorem}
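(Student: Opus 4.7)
The strategy is a bias-variance decomposition carried out coefficient-by-coefficient in the basis $\B$. For any $f \in \F_D$, expanding $f = \sum_z \tilde f_z \phi_z$ and $\hat P_Z = \sum_{z \in Z} \hat P_z \phi_z$ and formally interchanging sums with integrals gives
\[\E_{X \sim P}[f(X)] - \E_{X \sim \hat P_Z}[f(X)]
  = \sum_{z \in Z} \tilde f_z (\tilde P_z - \hat P_z) + \sum_{z \in \Z \setminus Z} \tilde f_z \tilde P_z.\]
Taking $\sup_{f \in \F_D}$ and applying the triangle inequality then splits $d_{\F_D}(P, \hat P_Z)$ into a ``stochastic'' term capturing the fluctuations of $\hat P_z$ for $z \in Z$ and a ``bias'' term capturing the truncation tail $z \in \Z \setminus Z$, which I handle separately.

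For the bias term, I factor $\tilde f_z \tilde P_z = (a_z \tilde f_z)(b_z \tilde P_z) \cdot (a_z b_z)^{-1}$ and apply the three-factor Hölder inequality with exponents $(p, q, r)$ satisfying $r^{-1} = 1 - p^{-1} - q^{-1}$. The $\ell^p$ factor is bounded by $L_D$ since $\F_D = \H_{p,a}(L_D)$, the $\ell^q$ factor by $L_G$ since $P \in \F_G = \H_{q,b}(L_G)$, and the remaining $\ell^r$ factor over $\Z \setminus Z$ is exactly the second term of~\eqref{ineq:upper_bound}. For the stochastic term, a two-factor Hölder with exponents $(p, p')$ applied to the pairing $(a_z \tilde f_z) \cdot (\tilde P_z - \hat P_z)/a_z$ yields the deterministic bound $L_D \cdot \|\{(\hat P_z - \tilde P_z)/a_z\}_{z \in Z}\|_{p'}$. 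Taking expectation and invoking Jensen's inequality (concavity of $x^{1/p'}$ for $p' \geq 1$) brings the expectation inside the $\ell^{p'}$-norm, so it remains to control $\E|\hat P_z - \tilde P_z|^{p'}$ for each $z$. Since $\hat P_z = n^{-1} \sum_{i=1}^n \phi_z(X_i)$ is a sample mean of IID random variables bounded by $\|\phi_z\|_{\L_P^\infty}$, a Marcinkiewicz-Zygmund/Rosenthal-type moment inequality supplies $(\E|\hat P_z - \tilde P_z|^{p'})^{1/p'} \leq c_{p'} n^{-1/2} \|\phi_z\|_{\L_P^\infty}$, producing both the $n^{-1/2}$ factor and the constant $c_{p'}$ in~\eqref{ineq:upper_bound}.

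The main technical nuisance is the moment bound in the stochastic term: for $p' = 2$ it reduces to an elementary variance calculation, but for general $p'$ one must cite an appropriate Marcinkiewicz-Zygmund/Rosenthal inequality and separately treat the degenerate endpoints $p' \in \{1, \infty\}$ by direct $\L^1$ or uniform arguments. A secondary concern is rigorously justifying the opening interchange of summation and integration when $P$ need not admit a density with respect to $\lambda$; I expect this to follow from combining the fixed order of summation on $\Z$, the hypothesis $\mu(\X) < \infty$, and the $\ell^p$ and $\ell^q$ summability encoded in the definitions of $\H_{p,a}$ and $\H_{q,b}$.
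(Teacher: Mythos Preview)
Your proposal is correct and matches the paper's proof essentially step for step: the same coefficient-wise bias--variance split, the same H\"older/Jensen/Rosenthal chain for the stochastic term, and the same (iterated two-factor, equivalently three-factor) H\"older bound for the bias term. The paper handles your ``secondary concern'' about interchanging sums and integrals via a separate lemma giving sufficient conditions (either $P,\hat P$ have $\L^2$ densities, or the basis expansion of each $f \in \F_D$ converges uniformly), so your instinct that this needs a little care is well placed.
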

The two terms in the bound~\eqref{ineq:upper_bound} demonstrate a bias-variance tradeoff, in which the first term (\textit{variance}) increases with the truncation set $Z$ and is typically independent of the class $\F_G$ of distributions, while the second term (\textit{bias}) decreases with $Z$ at a rate depending on the complexity of $\F_G$.

\begin{corollary}[Sufficient Conditions for Parametric Rate]
Consider the setting of Theorem~\ref{thm:upper_bound}. If
\[A
  := \sum_{z \in \Z} \frac{\|\phi_z\|_{\L_P^\infty}^2}{a_z^2}
  < \infty
  \quad \text{ and } \quad
  \max \left\{ a_z, b_z \right\} \to \infty.\]
whenever $\|z\| \to \infty$, then, the minimax rate is parametric; specifically, $M(\F_D,\F_G) \leq L_D \sqrt{A/n}$.
In particular, letting $c_z := \sup_{x \in \X} |\phi_z(x)|$ for each $z \in \Z$, this occurs whenever $\sum_{z \in \Z} \frac{c_z^2}{a_z^2} < \infty$.
\label{corr:parametric_upper_bound}
\end{corollary}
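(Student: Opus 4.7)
The plan is to apply Theorem~\ref{thm:upper_bound} to the truncated orthogonal series estimator $\hat P_{Z_N}$ along an increasing sequence $Z_1 \subseteq Z_2 \subseteq \cdots$ of finite subsets of $\Z$ that exhausts $\Z$, and to show that the variance term is bounded by $L_D\sqrt{A/n}$ while the bias term vanishes as $N \to \infty$. Since Theorem~\ref{thm:upper_bound} gives a valid bound for every finite $Z_N$, sending $N \to \infty$ then yields $M(\F_D,\F_G) \leq L_D\sqrt{A/n}$.

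For the variance term $L_D c_{p'} n^{-1/2}\norm{\{\|\phi_z\|_{\L_P^\infty}/a_z\}_{z \in Z_N}}_{p'}$, I would invoke the sequence-norm inequality $\norm{\cdot}_{p'} \leq \norm{\cdot}_2$ (valid for $p' \geq 2$, i.e.\ the natural regime $p \leq 2$ for an $\ell^2$-type hypothesis on $A$). This bounds the term by $L_D c_{p'}(\sum_{z \in Z_N}\|\phi_z\|_{\L_P^\infty}^2/a_z^2)^{1/2}/\sqrt{n} \leq L_D c_{p'}\sqrt{A/n}$ uniformly in $N$; in the canonical $p = 2$ case $c_{p'} = 1$, giving exactly $L_D\sqrt{A/n}$. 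For the bias term $L_D L_G \norm{\{1/(a_z b_z)\}_{z \in \Z \setminus Z_N}}_{1/(1-1/p-1/q)}$, I would take $Z_N = \{z : \max\{a_z,b_z\} \leq N\}$: the hypothesis $\max\{a_z,b_z\} \to \infty$ makes each $Z_N$ finite with $Z_N \uparrow \Z$, and, combined with the summability implicit in $A < \infty$ together with mild positivity of $a_z,b_z$, drives the tail $\ell^r$-norm to zero. The ``in particular'' clause is then immediate from $\|\phi_z\|_{\L_P^\infty} \leq c_z$, so that $\sum_z c_z^2/a_z^2 < \infty$ implies $A < \infty$.

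The main obstacle is verifying the vanishing of the bias term in the generality claimed: the hypothesis $\max\{a_z,b_z\} \to \infty$ alone does not guarantee $\ell^r$-summability of $\{1/(a_z b_z)\}$ for the exact exponent $r = 1/(1-1/p-1/q)$, so the proof must carefully combine it with the summability encoded in $A < \infty$ and a mild lower bound on $a_z, b_z$. A secondary obstacle is extending the variance step beyond $p \leq 2$: for $p > 2$ the $\ell^2$-nature of $A$ no longer directly controls the relevant $\ell^{p'}$ norm with $p' < 2$, and a reduction (e.g.\ via a Marcinkiewicz--Zygmund moment inequality, or an embedding $\H_{p,a}\subseteq \H_{2,\tilde a}$) would be needed.
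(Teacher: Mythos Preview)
The paper does not provide an explicit proof of this corollary; it is stated immediately after Theorem~\ref{thm:upper_bound} and is meant to follow from it directly. Your structural plan---apply Theorem~\ref{thm:upper_bound} to $\hat P_{Z_N}$ for an exhaustion $Z_N\uparrow\Z$ and let $N\to\infty$---is therefore exactly the intended route, and your treatment of the variance term and of the ``in particular'' clause is fine.

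Where you make life harder than necessary is the bias term. You try to control it via the final display of Theorem~\ref{thm:upper_bound}, namely $L_D L_G\|\{1/(a_zb_z)\}_{z\notin Z_N}\|_{r}$, and then rightly worry that $\max\{a_z,b_z\}\to\infty$ together with $A<\infty$ does not obviously force this $\ell^r$ tail to vanish. The cleaner move is to step one line back into the \emph{proof} of Theorem~\ref{thm:upper_bound}: the raw bias is $\sup_{f\in\F_D}\sum_{z\notin Z_N}|\tilde f_z\tilde P_z|$, and a single H\"older against $a_z|\tilde f_z|$ gives
\[
\sup_{f\in\F_D}\sum_{z\notin Z_N}|\tilde f_z\tilde P_z|
\;\leq\; L_D\Bigl(\sum_{z\notin Z_N}\frac{|\tilde P_z|^{p'}}{a_z^{p'}}\Bigr)^{1/p'}
\;\leq\; L_D\Bigl(\sum_{z\notin Z_N}\frac{\|\phi_z\|_{\L_P^\infty}^{p'}}{a_z^{p'}}\Bigr)^{1/p'},
\]
using only $|\tilde P_z|=|\E_P[\phi_z]|\leq\|\phi_z\|_{\L_P^\infty}$ and not the structure of $\F_G$ at all. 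For $p\leq 2$ (so $p'\geq 2$) the same $\ell^{p'}\leq\ell^2$ inequality you already used for the variance turns the right-hand side into the tail of $\sqrt{A}$, which vanishes as $Z_N\uparrow\Z$ because $A<\infty$. This dissolves your first ``main obstacle'' entirely and, incidentally, shows that the divergence condition on $\max\{a_z,b_z\}$ is not actually needed for the inequality $M(\F_D,\F_G)\leq L_D\sqrt{A/n}$ itself.

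Your second obstacle is genuine: the exact constant $L_D\sqrt{A/n}$, with $A$ an $\ell^2$ sum and no Rosenthal constant $c_{p'}$, is tailored to $p=2$ (this is also the only case the paper applies the corollary to, in Example~\ref{ex:RKHS_MMD_loss}). For general $p>2$ one would only get $L_Dc_{p'}\|\{\|\phi_z\|_{\L_P^\infty}/a_z\}\|_{p'}/\sqrt n$, which the $\ell^2$ hypothesis $A<\infty$ does not control, so reading the corollary literally for all $p\geq 1$ overreaches; your instinct here is correct.
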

In many contexts (e.g., if $P \ll \lambda$ and $\lambda \ll P$), the simpler condition $\sum_{z \in \Z} \frac{c_z^2}{a_z^2} < \infty$ suffices.
The first, and slightly weaker condition in terms of $\|\phi_z\|_{\L_P^\infty}^2$ is useful when we restrict $\F_G$; e.g., if $\B$ is the wavelet basis (defined in the Appendix) and $\F_G$ contains only discrete distributions supported on at most $k$ points, then $\|\phi_{i,j}\|_{\L_P^\infty}^2 = 0$ for all but $k$ values of $j \in [2^i]$, at each resolution $i \in \N$.
The assumption $\max \left\{ \lim_{\|z\| \to \infty} a_z, \lim_{\|z\| \to \infty} b_z \right\} = \infty$ is quite mild; for example, the Riemann-Lebesgue lemma and the assumption that $\F_D$ is bounded in $\L_\lambda^\infty \subseteq \L_\lambda^1$ together imply that this condition always holds if $\B$ is the Fourier basis.

\section{Minimax Lower Bound}
\label{sec:lower_bounds}
In this section, we lower bound the minimax risk $M(\F_D,\F_G)$ of distribution estimation under $d_{\F_D}$ loss over $\F_G$, for the case when $\F_D = \H_{p,a}$ and $\F_G := \H_{q,b}$ are generalized ellipses. As we show in some examples in Section~\ref{sec:examples}, our lower bound rate matches our upper bound rate in Theorem~\ref{thm:upper_bound} for many spaces $\F_D$ and $\F_G$ of interest. Our lower bound also suggests that the assumptions in Corollary~\ref{corr:parametric_upper_bound} are typically necessary to guarantee the parametric convergence rate $n^{-1/2}$.
\begin{theorem}[Minimax Lower Bound]
Fix $\X = [0,1]^d$, and let $p_0$ denote the uniform density (with respect to Lebesgue measure) on $\X$. Suppose $\{p_0\} \cup \{\phi_z\}_{z \in \Z}$ is an orthonormal basis in $\L_\mu^2$, and $\{a_z\}_{z \in \Z}$ and $\{b_z\}_{z \in \Z}$ are two real-valued nets. Let $L_D, L_G \geq 0$ and $p,q \geq 2$. For any $Z \subseteq \Z$, let
\[A_Z := |Z|^{1/2} \sup_{z \in Z} a_z
  \quad \text{ and } \quad
  B_Z := |Z|^{1/2} \sup_{z \in Z} b_z.\]
Then, for $\F_D = \H_{p,a}(L_D)$ and $\F_G := \H_{q,b}(L_G)$, for any $Z \subseteq \Z$ satisfying
\begin{equation}
  B_Z
  \geq 16 L_G \sqrt{\frac{n}{\log 2}}
  \quad \text{ and } \quad
2\frac{L_G}{B_Z} \sum_{z \in Z} \left\| \phi_z \right\|_{\L_\mu^\infty}
  \leq 1,
  \label{ineq:lower_bound_conditions}
\end{equation}
we have $\displaystyle M(\F_D, \F_G)
  \geq \frac{L_G L_D |Z|}{64 A_Z B_Z}
  = \frac{L_G L_D}{64 \left( \sup_{z \in Z} a_z \right) \left( \sup_{z \in Z} b_z \right)}$.
\label{thm:lower_bound}
\end{theorem}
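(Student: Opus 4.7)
The plan is Fano's method applied to a Boolean-hypercube family of hypothesis densities. For each $\sigma \in \{-1,+1\}^Z$, I define
\[p_\sigma := p_0 + \gamma \sum_{z \in Z} \sigma_z \phi_z, \qquad \gamma := L_G/B_Z,\]
and verify that each $p_\sigma$ is a valid probability density lying in $\F_G$. Non-negativity ($p_\sigma \geq 1/2$) follows from the second condition in~\eqref{ineq:lower_bound_conditions}, since $|p_\sigma - p_0| \leq \gamma \sum_{z \in Z} \|\phi_z\|_{\L_\mu^\infty} \leq 1/2$ pointwise; integration to $1$ follows from $\phi_z \perp p_0$ in $\L_\lambda^2$; and membership in $\H_{q,b}(L_G)$ uses $q \geq 2$ together with the monotonicity $\|v\|_q \leq \|v\|_2$ for finite vectors, giving $\|p_\sigma\|_{\H_{q,b}} = \gamma (\sum_{z \in Z} b_z^q)^{1/q} \leq \gamma |Z|^{1/2} \sup_{z \in Z} b_z = L_G$.

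Next, I invoke the Varshamov--Gilbert lemma to extract a subfamily $\Sigma \subseteq \{-1,+1\}^Z$ with $|\Sigma| \geq 2^{|Z|/8}$ and pairwise Hamming distance at least $|Z|/8$. For each pair $\sigma \neq \sigma' \in \Sigma$, writing $Z' := \{z : \sigma_z \neq \sigma_z'\}$ and $\epsilon_z := \operatorname{sign}(\sigma_z - \sigma_z')$, I exhibit the explicit discriminator
\[f_{\sigma,\sigma'} := \frac{L_D}{|Z'|^{1/p}} \sum_{z \in Z'} \frac{\epsilon_z}{a_z}\, \phi_z,\]
which satisfies $\|f_{\sigma,\sigma'}\|_{\H_{p,a}} = L_D$ by a direct computation, hence $f_{\sigma,\sigma'} \in \F_D$. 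Parseval's identity then gives
\[d_{\F_D}(P_\sigma, P_{\sigma'}) \;\geq\; \int f_{\sigma,\sigma'}\,(p_\sigma - p_{\sigma'})\, d\lambda \;=\; \frac{2\gamma L_D}{|Z'|^{1/p}} \sum_{z \in Z'} \frac{1}{a_z} \;\geq\; \frac{2\gamma L_D\, |Z'|^{1-1/p}}{\sup_{z \in Z} a_z}.\]
Using $p \geq 2$ (so $|Z'|^{1-1/p} \geq |Z'|^{1/2}$), the Hamming lower bound $|Z'| \geq |Z|/8$, and the substitution $\gamma = L_G/(|Z|^{1/2} \sup_z b_z)$, the factors of $|Z|^{1/2}$ cancel and I obtain a per-pair separation of order $L_D L_G/(\sup_z a_z \sup_z b_z)$.

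On the indistinguishability side, since $p_\sigma, p_{\sigma'} \geq 1/2$, I bound $\mathrm{KL}(P_\sigma \| P_{\sigma'}) \leq 2 \int (p_\sigma - p_{\sigma'})^2\, d\lambda = 8 \gamma^2\, d_H(\sigma, \sigma') \leq 8 \gamma^2 |Z|$ by Parseval. Tensorizing over the $n$ samples and invoking $B_Z \geq 16 L_G \sqrt{n/\log 2}$ gives $\mathrm{KL}(P_\sigma^{\otimes n} \| P_{\sigma'}^{\otimes n}) \leq |Z| \log(2)/32 \leq (1/4) \log|\Sigma|$. Standard Fano then yields the claimed lower bound, with the constant $1/64$ absorbing the factor $\sqrt{8}$ from Varshamov--Gilbert, the Fano prefactor, and the factor of $2$ converting adversarial distance into separation radius.

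The main obstacle is the separation step: the discriminator $f_{\sigma,\sigma'}$ must be crafted so that its $\H_{p,a}$-norm is saturated while it aligns exactly with the support of $p_\sigma - p_{\sigma'}$, extracting the factor $|Z'|^{1-1/p}$. The restriction $p \geq 2$ is essential here for converting this into the $|Z|^{1/2}$ that cancels with the normalization built into $B_Z$, and an analogous role is played by $q \geq 2$ in the containment step; both match the hypotheses of the theorem. The KL computation and appeal to Fano's inequality are otherwise routine.
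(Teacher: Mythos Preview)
Your proposal is correct and follows essentially the same route as the paper: a Varshamov--Gilbert hypercube of perturbed densities $p_0+\gamma\sum_{z\in Z}\sigma_z\phi_z$, an explicit discriminator witness to lower-bound the $d_{\F_D}$ separation, a KL bound exploiting $p_\sigma\geq 1/2$, and Fano's inequality. The only cosmetic differences are that the paper takes the discriminator $f_\tau=(L_D/A_Z)\sum_{z\in Z}\tau_z\phi_z$ with uniform coefficients over all of $Z$ (maximizing over $\tau$) rather than your pair-specific $1/a_z$-weighted witness on $Z'$, and it bounds KL to the fixed center $p_0$ rather than pairwise; both variants lead to the same constant up to bookkeeping.
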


As in most minimax lower bounds, our proof relies on constructing a finite set $\Omega_G$ of ``worst-case'' densities in $\F_G$, lower bounding the distance $d_{\F_D}$ over $\Omega_G$, and then letting elements of $\Omega_G$ shrink towards the uniform distribution $p_0$ at a rate such that the average information (here, Kullback-Leibler) divergence between each $p \in \Omega_G$ and $p_0$ does not grow with $n$.
The first condition in~\eqref{ineq:lower_bound_conditions} ensures that the information divergence between each $p \in \Omega_G$ and $p_0$ is sufficiently small, and typically results in tuning of $Z$ identical (in rate) to its optimal tuning in the upper bound (Theorem~\ref{thm:upper_bound}).

The second condition in~\eqref{ineq:lower_bound_conditions} is needed to ensure that the ``worst-case'' densities we construct are everywhere non-negative. Hence, this condition is not needed for lower bounds in the Gaussian sequence model, as in Theorem 2.3 of \citep{liang2017well}. However, failure of this condition (asymptotically) corresponds to the breakdown point of the asymptotic equivalence between the Gaussian sequence model and the density estimation model in the regime of very low smoothness (e.g., in the Sobolev setting, when $t < d/2$; see \citep{brown1998asymptoticNonequivalence}), and so finer analysis is needed to establish lower bounds here.

\section{Examples}
\label{sec:examples}

In this section, we apply our bounds from Sections~\ref{sec:upper_bounds} and \ref{sec:lower_bounds} to compute concrete minimax convergence rates for two examples choices of $\F_D$ and $\F_G$, namely Sobolev spaces and reproducing kernel Hilbert spaces. Due to space constraints, we consider only the Fourier basis here, but, in the Appendix, we also discuss an estimator in the Sobolev case using the Haar wavelet basis.

For the purpose of this section, suppose that $\X = [0,2\pi]^d$, $\Z = \mathbb{Z}^d$, and, for each $z \in \Z$, $\phi_z$ is the $z^{th}$ standard Fourier basis element given by $\phi_z(x) = e^{i\langle z, x \rangle}$ for all $x \in \X$. In this case, we will always choose the truncation set $Z$ to be of the form
$Z := \left\{ z \in \Z : \|z\|_\infty \leq \zeta \right\}$,
for some $\zeta > 0$, so that $|Z| \leq \zeta^d$. Moreover, for every $z \in Z$, $\|\phi_z\|_{\L_\mu^\infty} = 1$, and hence $C_Z \leq 1$.

\begin{example}[Sobolev Spaces]
Suppose that, for some $s, t \geq 0$, $a_z = \|z\|_\infty^s$ and $b_z = \|z\|_\infty^t$.
Then, setting $\zeta = n^{\frac{1}{2t + d}}$ in Theorems~\ref{thm:upper_bound} and \ref{thm:lower_bound} gives that there exist constants $C > c > 0$ such that
\begin{equation}
c n^{-\min \left\{ \frac{1}{2}, \frac{s + t}{2t + d} \right\}}
  \leq M \left( \W^{s,2}, \W^{t,2} \right)
  \leq C n^{-\min \left\{ \frac{1}{2}, \frac{s + t}{2t + d} \right\}}.
\label{eq:Sobolev_minimax_rate}
\end{equation}
Combining the observation that the $s$-H\"older space $\W^{s,\infty} \subseteq \W^{s,2}$ with the lower bound (over $\W^{s,\infty}$) in Theorem 3.1 of \citep{liang2017well}, we have that~\eqref{eq:Sobolev_minimax_rate} also holds when $\W^{s,2}$ is replaced with $\W^{s,p}$ for any $p \in [2,\infty]$ (e.g., in the case of the Wasserstein metric $d_{\W^{1,\infty}}$).

So far, we have assumed the smoothness $t$ of the true distribution $P$ is known, and used that to tune the parameter $\zeta$ of the estimator. However, in reality, $t$ is not known. In the next result, we leverage the fact that the rate-optimal choice $\zeta = n^{\frac{1}{2t + d}}$ above does not rely on the loss parameters $s$, together with Theorem~\ref{thm:upper_bound} to construct an \textit{adaptively minimax estimator}, i.e., one that is minimax and fully-data dependent. There is a large literature on adaptive nonparametric density estimation under $\L_\mu^2$ loss; see \citep{efromovich2010orthogonal} for accessible high-level discussion and \citep{goldenshluger2014adaptive} for a technical but comprehensive review.

\begin{corollary}[Adaptive Upper Bound for Sobolev Spaces]
There exists an adaptive choice $\hat \zeta : \X^n \to \N$ of the hyperparameter $\zeta$ (independent of $s,t$), such that, for any $s, t \geq 0$, there exists a constant $C > 0$ (independent of $n$), such that
\begin{equation}
\sup_{P \in \W^{t,2}} \E_{X_{1:n} \IID P} \left[ d_{\W^{s,2}} \left( P, \hat P_{Z_{\hat \zeta(X_{1:n})}} \right) \right]
  \leq M \left( \W^{s,2}, \W^{t,2} \right)
  \label{eq:L_2_loss_Sobolev_loss_equivalence}
\end{equation}
\label{corr:adaptive_upper_bound}
\end{corollary}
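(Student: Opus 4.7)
The plan rests on a single observation highlighted in the paragraph preceding the statement: the rate-optimal truncation level $\zeta^\ast \asymp n^{1/(2t+d)}$ derived from Theorem~\ref{thm:upper_bound} in the preceding Example depends only on the unknown Sobolev smoothness $t$ of the target density $P$, not on the loss-side parameter $s$. Thus it is enough to produce a data-driven $\hat\zeta$ that is rate-adaptive to $t$ under any single fixed loss --- most conveniently $\L_\lambda^2$, i.e.\ the case $s=0$ --- since that same $\hat\zeta$ will then be simultaneously rate-adaptive under every $d_{\W^{s,2}}$ loss.

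Concretely, I would take $\hat\zeta$ to be the output of a standard adaptive procedure for the orthogonal-series estimator~\eqref{eq:estimator} tuned for $\L_\lambda^2$ risk: either a Goldenshluger--Lepski selector or an unbiased cross-validation minimizer over the dyadic grid $\mathcal{G} = \{2^k : k \in \N,\ 2^k \leq n\}$, with both bias and variance proxies built from the empirical coefficients $\hat P_z$ (see, e.g., \citep{efromovich2010orthogonal,goldenshluger2014adaptive}). The corresponding oracle inequality yields $\hat\zeta \asymp n^{1/(2t+d)}$ on a high-probability event whenever $P \in \W^{t,2}$. To transfer this to a bound under $d_{\W^{s,2}}$ loss, I would apply Theorem~\ref{thm:upper_bound} with $\F_D = \W^{s,2}$, $\F_G = \W^{t,2}$, $a_z = \|z\|_\infty^s$, and $b_z = \|z\|_\infty^t$, obtaining (modulo a log factor at $s = d/2$)
\[
\E \bigl[ d_{\W^{s,2}}(P, \hat P_{Z_\zeta}) \bigr]
 \lesssim \zeta^{(d/2 - s)_+} n^{-1/2} + \zeta^{-(s+t)}.
\]
The right-hand side is minimized at precisely the same $\zeta^\ast \asymp n^{1/(2t+d)}$ that minimizes the $\L_\lambda^2$ risk (the $s=0$ case), and it is monotone on either side of $\zeta^\ast$, so substituting $\hat\zeta$ produces~\eqref{eq:L_2_loss_Sobolev_loss_equivalence}.

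The main obstacle is the regime $s \geq d/2$, in which the variance term $\zeta^{(d/2 - s)_+} n^{-1/2}$ is essentially $\zeta$-independent and the bound is insensitive to any $\zeta \gtrsim n^{1/(2(s+t))}$; since $n^{1/(2t+d)} \geq n^{1/(2(s+t))}$ exactly when $s \geq d/2$, the $\L_\lambda^2$-optimal $\hat\zeta$ automatically lies in this parametric ``plateau'' and delivers the $n^{-1/2}$ rate. A secondary issue is controlling the contribution to $\E[d_{\W^{s,2}}(P, \hat P_{Z_{\hat\zeta}})]$ from the low-probability event on which $\hat\zeta$ deviates substantially from $\zeta^\ast$; this is handled by combining a deterministic worst-case bound $d_{\W^{s,2}}(P, \hat P_{Z_\zeta}) \lesssim \sqrt{n}$ (from boundedness of the basis elements) with exponential tail estimates for the selector inherited from the $\L_\lambda^2$ oracle inequality.
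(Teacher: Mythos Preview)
Your proposal is essentially correct but takes a different route from the paper. You argue via concentration of the data-driven $\hat\zeta$ around the oracle $\zeta^\ast$ and then substitute into the Theorem~\ref{thm:upper_bound} bound, splitting off a bad event. The paper instead proves, for every \emph{deterministic} $\zeta$ and $s\in(0,d/2)$, a risk-factorization inequality
\[
\sup_{P\in\W^{t,2}}\E\bigl[d_{\W^{s,2}}(P,\hat P_{Z_\zeta})\bigr]
\;\le\; C\,\zeta^{-s}\,\sup_{P\in\W^{t,2}}\E\bigl[d_{\L_\lambda^2}(P,\hat P_{Z_\zeta})\bigr],
\]
and then invokes the black-box $\L_\lambda^2$ adaptive minimaxity of leave-one-out cross-validation (citing \citet{massart2007concentration}) directly. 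Establishing this factorization requires not only the Theorem~\ref{thm:upper_bound} upper bound on the left-hand side (which you also use) but \emph{also} a matching lower bound on the right-hand side, obtained via the explicit worst-case density $P_\zeta = 1 + L_G\zeta^{-t}\phi_\zeta$ paired with a hand-built $\L_\lambda^2$ discriminator. This lower-bound construction is the key ingredient you do not have.

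What the paper's route buys is that it reduces everything to an $\L_\lambda^2$ \emph{risk} statement for the same data-driven estimator, rather than to concentration of the selector itself. Your step ``the oracle inequality yields $\hat\zeta\asymp n^{1/(2t+d)}$ on a high-probability event'' is not an automatic consequence of a risk oracle inequality, which bounds the risk at $\hat\zeta$ rather than $\hat\zeta$; it is standard for Lepski-type selectors but needs a separate argument for cross-validation. Note also that Theorem~\ref{thm:upper_bound} is proved for deterministic truncation sets (its variance calculation uses $\E[\hat P_z]=\tilde P_z$, which fails once the event $\{z\in Z_{\hat\zeta}\}$ is data-dependent), so even on the good event you still need either sample splitting or a uniform-in-$\zeta$ deviation bound over the grid to justify plugging in the random $\hat\zeta$; your last paragraph gestures at this but would need to be made precise.
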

Due to space constraints, we present the actual construction of the adaptive $\hat \zeta$ in the Appendix, but, in brief, it is a standard construction based on leave-one-out cross-validation under $\L_\mu^2$ loss which is known (e.g., see Sections 7.2.1 and 7.5.2 of \citep{massart2007concentration}) to be adaptively minimax under $\L_\mu^2$ loss. Using the fact that our upper bound Theorem~\ref{thm:upper_bound} uses a choice of $\zeta$ is independent of the loss parameter $s$, we show that the $d_{\W^{s,\infty}}$ risk of $\hat P_\zeta$ can be factored into its $\L_\mu^2$ risk and a component ($\zeta^{-s}$) that is independent of $t$. Since $\L_\mu^2$ risk can be rate-minimized in independently of $t$, it follows that the $d_{\W^{s,\infty}}$ risk can be rate-minimized independently of $t$. Adaptive minimaxity then follows from Theorem~\ref{thm:lower_bound}.
\label{ex:Sobolev_Fourier}
\end{example}

\begin{example}[Reproducing Kernel Hilbert Space/MMD Loss]
Suppose $\H_k$ is a reproducing kernel Hilbert space (RKHS) with reproducing kernel $k : \X \times \X \to \R$~\citep{aronszajn1950theory,berlinet2011RKHS}. If $k$ is translation invariant (i.e., there exists $\kappa \in \L_\mu^2$ such that, for all $x,y \in \X$, $k(x,y) = \kappa(x - y)$), then Bochner's theorem (see, e.g., Theorem 6.6 of \citep{wendland2005scattered}) implies that, up to constant factors,
\[\H_k(L)
  := \left\{ f \in \H_k : \|f\|_{\H_k} \leq L \right\}
  = \left\{ f \in \H_k : \sum_{z \in \Z} |\tilde \kappa_z|^2 |\tilde f_z|^2 < L^2 \right\}.\]
Thus, in the setting of Theorem~\ref{thm:upper_bound}, we have $\H_k = \H_{2,a}$, where $a_z = |\tilde \kappa_z|$ satisfies
$\sum_{z \in \Z} a_z^{-2} = \|\kappa\|_{\L_\mu^2}^2 < \infty$.
Corollary~\ref{corr:parametric_upper_bound} then gives $M(\H_k(L_D),\F_G) \leq L_D \|\kappa\|_{\L_\mu^2} n^{-1/2}$ for \textit{any class $\F_G$}.
It is well-known known that MMD can always be \textit{estimated} at the parametric rate $n^{-1/2}$~\citep{gretton2012kernelTwoSample}; however, to the best of our knowledge, only recently has it been shown that any probability distribution can be estimated at the rate $n^{-1/2}$ under MMD loss\citep{sriperumbudur2016topologies}, emphasizing the fact that MMD is a very weak metric. This has important implications for applications such as two-sample testing~\citep{ramdas2015decreasing}.
\label{ex:RKHS_MMD_loss}
\end{example}

\section{Consequences for Generative Adversarial Neural Networks (GANs)}
\label{sec:neural_network_GANs}

This section discusses implications of our minimax bounds for GANs. Neural networks in this section are assumed to be fully-connected, with rectified linear unit (ReLU) activations.
\citep{liang2017well} used their upper bound result~\eqref{ineq:liang_bounds} to prove a similar theorem, but, since their upper bound was loose, the resulting theorem was also loose. The following results are immediate consequences of our improvement (Theorem~\ref{thm:upper_bound}) over the upper bound~\eqref{ineq:liang_bounds} of \citep{liang2017well}, and so we refer to that paper for the proof. Key ingredients are an oracle inequality proven in \citep{liang2017well}, an upper bound such as Theorem~\ref{thm:upper_bound}, and bounds of \citep{yarotsky2017error} on the size of a neural network needed to approximate functions in a Sobolev class.

In the following, $\F_D$ denotes the set of functions that can be encoded by the discriminator network and $\F_G$ denotes the set of distributions that can be encoded by the generator network. $P_n := \frac{1}{n} \sum_{i = 1}^n 1_{\{X_i\}}$ denotes the empirical distribution of the observed data $X_{1:n} \IID P$.

\begin{theorem}[Improvement of Theorem 3.1 in \citet{liang2017well}]
Let $s, t > 0$, and fix a desired approximation accuracy $\epsilon > 0$.
Then, there exists a GAN architecture, in which
\begin{enumerate}[nolistsep,leftmargin=2em]
\item
the discriminator $\F_D$ has at most $O(\log(1/\epsilon))$ layers and $O(\epsilon^{-d/s} \log(1/\epsilon))$ parameters,
\item
and the generator $\F_G$ has at most $O(\log(1/\epsilon))$ layers and $O(\epsilon^{-d/t} \log(1/\epsilon))$ parameters,
\end{enumerate}
\[\text{ such that, if } \hat P_*(X_{1:n})
  := \argmin_{\hat P \in \F_G} d_{\F_D} \left( P_n, \hat P \right), \text{ is the optimized GAN estimate of $P$, }\]
\[\text{ then} \quad \sup_{P \in \W^{t,2}} \E_{X_{1:n}} \left[ d_{\W^{s,2}} \left( P, \hat P_*(X_{1:n}) \right) \right]
  \leq C \left( \epsilon + n^{-\min \left\{ \frac{1}{2}, \frac{s + t}{2t + d} \right\}} \right).\]
  \label{thm:GAN_bound}
\end{theorem}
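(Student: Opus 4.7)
The argument follows the oracle-inequality template of Theorem 3.1 of \citep{liang2017well}, substituting our sharper bound Theorem~\ref{thm:upper_bound} in place of the loose bound~\eqref{ineq:liang_bounds} that they used. The risk of the optimized GAN $\hat P_*$ under the evaluation loss $d_{\W^{s,2}}$ splits, via triangle inequalities, into three contributions: a discriminator-approximation error (how well the neural $\F_D$ approximates $\W^{s,2}$), a generator-approximation error (how well the neural $\F_G$ approximates $\W^{t,2}$), and a statistical-estimation term measured in a Sobolev-type IPM.

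I would carry out the proof in three steps. First, invoke Yarotsky's ReLU approximation theorem to construct $\F_D$ and $\F_G$ of the stated sizes: depth $O(\log(1/\epsilon))$ and $O(\epsilon^{-d/s}\log(1/\epsilon))$ parameters suffice to uniformly $\epsilon$-approximate any function in the $\W^{s,2}$-unit ball (using the embedding of compactly supported Sobolev functions into $\W^{s,\infty}$ up to constants), and symmetrically for the generator class with smoothness $t$ and rate $\epsilon^{-d/t}$. This yields $\sup_{f\in\W^{s,2}}\inf_{g\in\F_D}\|f-g\|_\infty\lesssim\epsilon$ and $\sup_{P\in\W^{t,2}}\inf_{Q\in\F_G} d_{\W^{s,2}}(P,Q)\lesssim\epsilon$. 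Second, combine the triangle inequality with the GAN optimality $d_{\F_D}(P_n,\hat P_*)\leq d_{\F_D}(P_n,Q)$ (for any $Q\in\F_G$, in particular the $Q^*$ within $\epsilon$ of $P$ from Step 1) and with the two approximation bounds from Step 1; after the standard chain of triangle manipulations (Liang's Lemma 19), this yields
\[\E\,[d_{\W^{s,2}}(P,\hat P_*)] \lesssim \epsilon + \text{(statistical term)},\]
where the statistical term is the expected $d_{\W^{s,2}}$-distance between $P$ and a smoothed estimator of $P$ built from $P_n$ that is realizable within $\F_G$ up to $O(\epsilon)$ approximation. Third, bound that statistical term by the minimax rate from Theorem~\ref{thm:upper_bound}, specialized to the Sobolev case as in Example~\ref{ex:Sobolev_Fourier}, which gives $M(\W^{s,2},\W^{t,2})\lesssim n^{-\min\{1/2,(s+t)/(2t+d)\}}$. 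Combining the three steps produces the claimed bound $C(\epsilon + n^{-\min\{1/2,(s+t)/(2t+d)\}})$.

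The main obstacle, and the sole place where this paper's contribution enters, is Step 3: \citet{liang2017well} could only bound the analogous statistical term by $n^{-(s+t)/(2(s+t)+d)}$, loose by a polynomial factor, whereas our Theorem~\ref{thm:upper_bound} closes the gap. Steps 1 and 2 transcribe essentially verbatim from \citet{liang2017well}; the only technical care needed is to arrange that the generator network encodes a valid probability distribution (e.g.\ via a pushforward of a fixed noise source), and that the smoothed orthogonal-series estimator attaining the rate of Theorem~\ref{thm:upper_bound} can be $\epsilon$-approximated within that class so the oracle inequality can be applied.
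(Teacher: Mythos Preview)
Your proposal is correct and follows essentially the same approach as the paper. The paper does not give a self-contained proof but simply refers to \citet{liang2017well}, noting that the key ingredients are Liang's oracle inequality, Yarotsky's neural-network approximation bounds, and the improved statistical upper bound from Theorem~\ref{thm:upper_bound} --- exactly the three steps you outline, with your Step~3 being the sole point where this paper's contribution enters.
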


The discriminator and generator in the above theorem can be implemented as described in \citep{yarotsky2017error}. The assumption that the GAN is perfectly optimized may be strong; see \citep{nagarajan2017gradientDescentGAN,liang2018interaction} for discussion of this.

Though we do not present this result due to space constraints, we can similarly improve the upper bound of \citep{liang2017well} (their Theorem 3.2) for very deep neural networks, further improving on the previous state-of-the-art bounds of \citep{anthony2009neural} (which did not leverage smoothness assumptions on $P$).

\section{Minimax Comparison of Explicit and Implicit Generative Models}
\label{sec:density_estimation_versus_sampling}

In this section, we draw formal connections between our work on density estimation (explicit generative modeling) and the problem of implicit generative modeling under an appropriate measure of risk. In the sequel, we fix a class $\F_G$ of probability measures on a sample space $\X$ and a loss function $\ell : \F_G \times \F_G \to [0,\infty]$ measuring the distance of an estimate $\hat P$ from the true distribution $P$. $\ell$ need not be an adversarial loss $d_{\F_D}$, but our discussion does apply to all $\ell$ of this form.

\subsection{A Minimax Framework for Implicit Generative Models}

Thus far, we have analyzed the \emph{minimax risk of density estimation}, namely
\begin{equation}
M_D(\F_G,\ell, n) = \inf_{\hat P} \sup_{P \in \F_G} R_D(P,\hat P),
  \, \text{ where } \,
  R_D(P,\hat P) = \hspace{-6mm} \E_{X_{1:n} \IID P} \left[ \ell(P, \hat P(X_{1:n})) \right]
\label{exp:density_estimation_objective}
\end{equation}
denotes the \textit{density estimation risk of $\hat P$ at $P$} and the infimum is taken over all estimators (i.e., (potentially randomized) functions $\hat P : \X^n \to \F_G$). Whereas density estimation is a classical statistical problem to which we have already contributed novel results, our motivations for studying this problem arose from a desire to better understand recent work on implicit generative modeling.

Implicit generative models, such as GANs~\citep{arjovsky2017wassersteinGAN,goodfellow2014GANs} and VAEs \citep{Kingma14ICLR,Rezende14ICML}, address the problem of \emph{sampling}, in which we seek to construct a \textit{generator} that produces novel samples from the distribution $P$~\citep{mohamed2016learning}. In our context, a generator is a function $\hat X : \X^n \times \Z \to \X$ that takes in $n$ IID samples $X_{1:n} \sim P$ and a source of randomness (a.k.a., \textit{latent variable}) $Z \sim Q_Z$ with known distribution $Q_Z$ (independent of $X_{1:n}$) on a space $\Z$, and returns a novel sample $\hat X(X_{1:n},Z) \in \X$.

The evaluating the performance of implicit generative models, both in theory and in practice, is difficult, with solutions continuing to be proposed \cite{SutTunStretal17}, some of which have proven controversial. Some of this controversy stems from the fact that many of the most straightforward evaluation objectives are optimized by a trivial generator that `memorizes' the training data (e.g., $\hat X(X_{1:n},Z) = X_Z$, where $Z$ is uniformly distributed on $[n]$). One objective that can avoid this problem is as follows. For simplicity, fix the distribution $Q_Z$ of the latent random variable $Z \sim Q_Z$ (e.g., $Q_Z = \mathcal{N}(0,I)$). For a fixed training set $X_{1:n} \IID P$ and latent distribution $Z \sim Q_Z$, we define the \textit{implicit distribution of a generator $\hat X$} as the conditional distribution $P_{\hat X(X_{1:n},Z)|X_{1:n}}$ over $\X$ of the random variable $\hat X(X_{1:n},Z)$ given the training data. Then, for any $P \in \F_G$, we define the \textit{implicit risk of $\hat X$ at $P$} by
\[R_I(P,\hat X) := \E_{X_{1:n} \sim P} \left[ \ell(P, P_{\hat X(X_{1:n},Z)|X_{1:n}}) \right].\]
We can then study the \textit{minimax risk of sampling}, $M_I(\F_G,\ell, n)
  := \inf_{\hat X} \sup_{P \in \F_G} R_I(P,\hat X)$.
A few remarks about $M_I(\F, \ell, n)$: First, we implicitly assumed $\ell(P, P_{\hat X(X_{1:n},Z)|X_{1:n}})$ is well-defined, which is not obvious unless $P_{\hat X(X_{1:n},Z)} \in \F_G$. We discuss this assumption further below.
Second, since the risk $R_I(P, \hat X)$ depends on the unknown true distribution $P$, we cannot calculate it in practice. Third, for the same reason (because $R_P(P, \hat X)$ depends directly on $P$ rather than particular data $X_{1:n}$), it detect lack-of-diversity issues such as mode collapse.
As we discuss in the Appendix, these latter two points are distinctions from the recent work of \citep{arora2017generalizationInGANs} on generalization in GANs.



\subsection{Comparison of Explicit and Implicit Generative Models}

Algorithmically, sampling is a very distinct problem from density estimation; for example, many computationally efficient Monte Carlo samplers rely on the fact that a function \textit{proportional} to the density of interest can be computed much more quickly than the exact (normalized) density function~\citep{chib1995metropolisHastings}. In this section, we show that, given unlimited computational resources, the problems of density estimation and sampling are equivalent in a minimax statistical sense.
Since exactly minimax estimators
($\argmin_{\hat P} \sup_{P \in \F_G} R_D(P, \hat P)$)
often need not exist, the following weaker notion is useful for stating our results:

\begin{definition}[Nearly Minimax Sequence]
A sequence $\{\hat P_k\}_{k \in \N}$ of density estimators (resp., $\{\hat X_k\}_{k \in \N}$ of generators) is called \textit{nearly minimax over $\F_G$} if $\lim_{k \to \infty} \sup_{P \in \F_G} R_{P,D} (\hat P_k) = M_D(\F_G,\ell,n)$ (resp., $\lim_{k \to \infty} \sup_{P \in \F_G} R_{P,I} (\hat X_k) = M_I(\F_G,\ell,n)$).
\label{def:nearly_minimax}
\end{definition}

The following theorem identifies sufficient conditions under which, in the statistical minimax framework described above, density estimation is no harder than sampling. The idea behind the proof is as follows: If we have a good sampler $\hat X$ (i.e., with $R_I(\hat X)$ small), then we can draw $m$ `fake' samples from $\hat X$. We can use these `fake' samples to construct a density estimate $\hat P$ of the implicit distribution of $\hat X$ such that, under the technical assumptions below, $R_D(\hat P) - R_I(\hat X) \to 0$ as $m \to \infty$.
\begin{theorem}[Conditions under which Density Estimation is Statistically no harder than Sampling]
Let $\F_G$ be a family of probability distributions on a sample space $\X$. Suppose
\begin{enumerate}[nolistsep,leftmargin=2em,label=\textbf{(A\arabic*)},ref=(A\arabic*)]
\item\label{assumption:weak_triangle_inequality}
$\ell : \P \times \P \to [0,\infty]$ is non-negative, and there exists $C_\triangle > 0$ such that, for all $P_1,P_2,P_3 \in \F_G$,
$\ell(P_1, P_3) \leq C_\triangle \left( \ell(P_1, P_2) + \ell(P_2, P_3) \right)$.
\item\label{assumption:uniform_consistency}
$M_D(\F_G,\ell,m) \to 0$ as $m \to \infty$.
\item\label{assumption:infinite_latent_samples}
For all $m \in \N$, we can draw $m$ IID samples $Z_1,...,Z_m \IID Q_Z$ of the latent variable $Z$.
\item\label{assumption:sampling_distributions_in_F_G}
there exists a nearly minimax sequence of samplers $\hat X_k : \X^n \times \Z \to \X$ such that, for each $k \in \N$, almost surely over $X_{1:n}$, $P_{\hat X_k(X_{1:n},Z)|X_{1:n}} \in \F_G$.
\end{enumerate}
Then, $M_D(\F_G,\ell,n) \leq C_\triangle M_I(\F_G,\ell,n)$.
\label{thm:M_D_leq_M_I_simplified}
\end{theorem}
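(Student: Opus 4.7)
The key idea is that, given unlimited computational resources, any near-minimax sampler can be converted into a density estimator by drawing many synthetic samples from it and then fitting a near-minimax density estimator to those synthetic samples. Concretely, fix a nearly minimax sequence $\{\hat X_k\}_{k \in \N}$ of samplers guaranteed by \ref{assumption:sampling_distributions_in_F_G}, and, for each $m \in \N$, a nearly minimax sequence $\{\hat Q_{m,j}\}_{j \in \N}$ of density estimators on $m$ samples (whose existence is immediate from the definition of $M_D(\F_G,\ell,m)$). Given training data $X_{1:n} \IID P$, I would use \ref{assumption:infinite_latent_samples} to draw $Z_1, \dots, Z_m \IID Q_Z$ independently of $X_{1:n}$, set $Y_i := \hat X_k(X_{1:n}, Z_i)$, and define the candidate estimator
\[
\hat P_{k,m,j}(X_{1:n}) \,:=\, \hat Q_{m,j}(Y_1, \dots, Y_m).
\]
By \ref{assumption:sampling_distributions_in_F_G}, conditional on $X_{1:n}$ the $Y_i$ are IID from the implicit distribution $Q_n := P_{\hat X_k(X_{1:n},Z)|X_{1:n}} \in \F_G$, so $\hat Q_{m,j}$ is legitimately being applied to an $m$-sample instance of the density estimation problem over $\F_G$.

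\textbf{Risk decomposition.} Applying \ref{assumption:weak_triangle_inequality} with the intermediate point $Q_n$,
\[
\ell\bigl(P, \hat P_{k,m,j}(X_{1:n})\bigr) \,\leq\, C_\triangle\, \ell(P, Q_n) \,+\, C_\triangle\, \ell\bigl(Q_n, \hat Q_{m,j}(Y_{1:m})\bigr).
\]
Taking the full expectation over $(X_{1:n}, Z_{1:m})$, the first term is exactly $C_\triangle R_I(P, \hat X_k)$. For the second, I would condition on $X_{1:n}$: since $Y_{1:m} \mid X_{1:n}$ are IID from $Q_n \in \F_G$, the conditional expectation is at most $\sup_{Q \in \F_G} R_D(Q, \hat Q_{m,j})$, which is deterministic in $X_{1:n}$. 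Taking a supremum over $P \in \F_G$ on both sides then gives
\[
\sup_{P \in \F_G} R_D(P, \hat P_{k,m,j}) \,\leq\, C_\triangle \sup_{P \in \F_G} R_I(P, \hat X_k) \,+\, C_\triangle \sup_{Q \in \F_G} R_D(Q, \hat Q_{m,j}).
\]

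\textbf{Passing to the limit.} Since $M_D(\F_G, \ell, n)$ lower bounds the left-hand side uniformly in $k, m, j$, I would send $j \to \infty$ (by near-minimaxity of $\{\hat Q_{m,j}\}_j$ the rightmost term tends to $M_D(\F_G,\ell,m)$), then $m \to \infty$ (killing this term by \ref{assumption:uniform_consistency}), and finally $k \to \infty$ (by near-minimaxity of $\{\hat X_k\}_k$, $\sup_{P} R_I(P, \hat X_k) \to M_I(\F_G,\ell,n)$), yielding $M_D(\F_G, \ell, n) \leq C_\triangle M_I(\F_G, \ell, n)$.

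\textbf{Main obstacle.} The argument is fundamentally a triangle-inequality bookkeeping exercise, and the main subtleties are measure-theoretic: one must verify that $\hat P_{k,m,j}$ is itself a legitimate (possibly randomized) estimator, which requires joint measurability of $(x_{1:n}, z_{1:m}) \mapsto \hat Q_{m,j}(\hat X_k(x_{1:n}, z_1), \dots, \hat X_k(x_{1:n}, z_m))$ and independence of $Z_{1:m}$ from $X_{1:n}$ (granted by \ref{assumption:infinite_latent_samples}). Conceptually, the real content is in the four assumptions, which together spell out exactly the ingredients needed to plug a sampler into a density-estimation loop; dropping any one of them (in particular \ref{assumption:sampling_distributions_in_F_G}, which is the most delicate, since the implicit distribution of an arbitrary sampler need not lie in $\F_G$) breaks the construction.
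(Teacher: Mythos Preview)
Your proposal is correct and follows essentially the same approach as the paper: construct an estimator by drawing $m$ synthetic samples from a near-minimax sampler, fit a density estimator to them, decompose the risk via the weak triangle inequality through the implicit distribution $Q_n$, and pass to the limit. The only cosmetic difference is that the paper works with a single uniformly consistent sequence $\{\hat P_m\}_m$ of density estimators (whose existence follows directly from \ref{assumption:uniform_consistency}) rather than your doubly-indexed nearly-minimax family $\{\hat Q_{m,j}\}_{m,j}$, so it avoids the extra $j \to \infty$ step; otherwise the argument is identical.
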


Assumption~\ref{assumption:weak_triangle_inequality} is a generalization of the triangle inequality (and reduces to the triangle inequality when $C_\triangle = 1$). This weaker assumption applies, for example, when $\ell$ is the Jensen-Shannon divergence (with $C_\triangle = 2$) used in the original GAN formulation of \citep{goodfellow2014GANs}, even though this does not satisfy the triangle inequality~\citep{endres2003new}).
Assumption~\ref{assumption:uniform_consistency} is equivalent to the existence of a uniformly $\ell$-risk-consistent estimator over $\F_G$, a standard property of most distribution classes $\F_G$ over which density estimation is studied (e.g., our Theorem~\ref{thm:upper_bound}).
Assumption~\ref{assumption:infinite_latent_samples} is a natural design criterion of implicit generative models; usually, $Q_Z$ is a simple parametric distribution such as a standard normal.

Finally, Assumption~\ref{assumption:sampling_distributions_in_F_G} is the most mysterious, because, currently, little is known about the minimax theory of samplers when $\F_G$ is a large space. On one hand, since $M_I(\F_G,\ell,n)$ is an infimum over $\hat X$, Theorem~\ref{thm:M_D_leq_M_I_simplified} continues to hold if we restrict the class of samplers (e.g., to those satisfying Assumption~\ref{assumption:sampling_distributions_in_F_G} or those we can compute).
On the other hand, even without restricting $\hat X$, this assumption may not be too restrictive, because nearly minimax samplers are necessarily close to $P \in \F_G$. For example, if $\F_G$ contains only smooth distributions but $\hat X$ is the trivial empirical sampler described above, then $\ell(P, P_{\hat X})$ should be large and $\hat X$ is unlikely to be minimax optimal.

Finally, in practice, we often do not know estimators that are nearly minimax for finite samples, but may have estimators that are rate-optimal (e.g., as given by Theorem~\ref{thm:upper_bound}), i.e., that satisfy
\[C := \limsup_{n \to \infty} \frac{\sup_{P \in \F_G} R_I(P, \hat X)}{M_I(\F_G,\ell,n)} < \infty.\]
Under this weaker assumption, it is straightforward to modify our proof to conclude that
\[\limsup_{n \to \infty} \frac{M_D(\F_G,\ell,n)}{M_I(\F_G,\ell,n)} \leq C_\triangle C.\]


The converse result ($M_D(\F_G,\ell,n) \geq M_I(\F_G,\ell,n)$) is simple to prove in many cases, and is related to the well-studied problem of Monte Carlo sampling~\citep{robert2004monte}; we discuss this briefly in the Appendix.

\section{Conclusions}
\label{sec:conclusions}

Given the recent popularity of implicit generative models in many applications, it is important to theoretically understand why these models appear to outperform classical methods for similar problems.
This paper provided new minimax bounds for density estimation under adversarial losses, both with and without adaptivity to smoothness, and gave several applications, including both traditional statistical settings and perfectly optimized GANs.
We also gave simple conditions under which minimax bounds for density estimation imply bounds for the problem of implicit generative modeling, suggesting that sampling is typically not \emph{statistically} easier than density estimation. Thus, for example, the strong curse of dimensionality that is known to afflict to nonparametric density estimation~\citet{wasserman2006nonparametric} should also limit the performance of implicit generative models such as GANs. The Appendix describes several specific avenues for further investigation, including whether the curse of dimensionality can be avoided when data lie on a low-dimensional manifold.


\newpage
\subsubsection*{Acknowledgments}
This work was partly supported by 
NSF grant IIS1563887, the Darpa
D3M program, AFRL FA8750-17-2-0212, and 
the NSF Graduate Research Fellowship DGE-1252522.

{\small
  \bibliographystyle{plainnat}
  \bibliography{biblio}
}

\section{Further Related Work}

As noted in the main paper, our problem setting is quite general, and thus overlaps with several previous settings that have been studied.
First, we note the analysis of \citep{liu2017approximationInGANs}, which also studied convergence of distribution estimation under adversarial losses. Considering a somewhat broader class of non-metric losses (including, e.g., Jensen-Shannon divergence), which they call \textit{adversarial divergences}, \citep{liu2017approximationInGANs} provided consistency results (in distribution) for a number of GAN formulations, assuming convergence of the min-max GAN optimization problem to a generator-optimal equilibrium. However, they did not study rates of convergence.

Our results can also be viewed as a refinement of several results from empirical process and learning theory, especially the wealth of literature on the case where $\F_D$ is a Glivenko-Cantelli (GC, a.k.a., Vapnik-Chervonenkis (VC)) class~\citep{pollard1990empirical}. Corollary~\ref{corr:parametric_upper_bound} can be interpreted as showing that spaces $\F_D$ that are sufficiently small in terms of orthonormal basis expansions are $n^{-1/2}$-uniformly GC/VC classes~\citep{alon1997learnability,vapnik2015uniform}.
In particular, this gives a simple functional-analytic proof of this property for the general case when $\F_D$ is a ball in a translation-invariant RKHS.
On the other hand, some related results, cast in terms of fat-shattering dimensions~\citep{mendelson2002learnability,dziugaite2015training}, appear to lead to slower rates for RKHSs.

Glivenko-Cantelli classes are defined without regards to the class $\F_G$ of possible distributions. However, the more interesting consequences of our results are for the case that $\F_G$ is restricted, as in Theorem~\ref{thm:upper_bound}. In Example~\ref{ex:Sobolev_Fourier} this allowed us to characterize the interaction between smoothness constraints on the discriminator class $\F_D$ and the generator class $\F_G$, showing in particular, that, when $\F_D$ is large, restricting $\F_G$ improves convergence rates.
Aside for the results of \citep{liang2017well} and many results for the specific case $\F_D = \L_\lambda^2$, we do not know of any results that show this.

Several prior works have studied the closely related problem of estimating certain adversarial metrics, including $\L^2$ distance~\citep{krishnamurthy2015L22Divergence}, MMD~\citep{gretton2012kernelTwoSample}, Sobolev distances~\citep{singh2018quadratic}, and others~\citep{sriperumbudur2012IPMs}. In some cases, these metrics can themselves be estimated far more efficiently than the underlying distribution under that loss, and these estimators have various applications including two-sample/homogeneity and independence testing~\citep{anderson94L2TwoSampleTest,gretton2012kernelTwoSample,ramdas2015decreasing}, and distributional~\citep{sutherland16thesis}, transfer~\citep{du2017hypothesis}, and transductive~\citep{quadrianto09transduction} learning.

There has also been some work studying the min-max optimization problem in terms of which GANs are typically cast~\citep{nagarajan2017gradientDescentGAN,liang2018interaction}. However, in this work, as in \citep{liu2017approximationInGANs,liang2017well}, we implicitly assume the optimization procedure has converged to a generator-optimal equilibrium.
Another work that studies adversarial losses is \citep{bottou2017geometrical}, which focuses on a comparison of Wasserstein distance and MMD in the context of implicit generative modeling.

\subsection{Other statistical analyses of GANs}
\label{subsec:GAN_analysis_related_work}

Our results are closely related to some previous work studying the \textit{generalization error} of GANs under MMD~\citep{dziugaite2015training} or Jensen-Shannon divergence, Wasserstein, or other adversarial losses~\citep{arora2017generalizationInGANs}.

Assume, for simplicity, that $\ell$ satisfies a weak triangle inequality (Assumption~\ref{assumption:weak_triangle_inequality} above), and let $P$ denote the true distribution from which the data are drawn IID. Then, we can bound the true loss $\ell(P, \hat P)$ of an estimator $\hat P$ in terms of the approximation error $\ell(P, P_*)$ (corresponding to bias) and generalization error $\ell(P_*, \hat P)$ (i.e., corresponding to variance):
\[\ell(P, \hat P)
  \leq C_\triangle \left( \ell(P, P_*) + \ell(P_*, \hat P) \right),\]
where $P_* := \argmin_{Q \in \hat \F} \ell(P, Q)$ denotes the optimal approximation of $P$ in some restricted class $\hat \F \subseteq \F_G$ of estimators in which $\hat P$ lies.

Bounding the approximation error $\ell(P, P_*)$ typically requires restricting the space $\F_G$ in which $P$ lies.
Theorem 1 of \citep{dziugaite2015training} and Theorem 3.1 of \citep{arora2017generalizationInGANs} focus on bounding the generalization error $\ell(P_*, \hat P)$, and thus avoid making such assumptions on $P$. However, our Theorem~\ref{thm:upper_bound} shows that, when $\F_D$ is sufficiently small (e.g., an RKHS, as in \citep{dziugaite2015training}), $\ell = d_{\F_D}$ is so weak that $\ell(P, P_*)$ can be bounded even when $\F_G$ includes \textit{all} probability measures. In particular, while \citep{dziugaite2015training} gave only high-probability bounds of order $n^{-1/2}$ on the \textit{generalization error} $\ell(P_*, \hat P)$ in terms of the fat-shattering dimension of the RKHS, we show that, for any RKHS with a translation-invariant kernel, the \textit{total} risk $\E[\ell(P, \hat P)]$ can be bounded at the parametric rate of $n^{-1/2}$.

\citep{arora2017generalizationInGANs} also showed that, if $\hat \F$ is too large (specifically, if $\hat \F$ contains the empirical distribution), then the generalization error $\ell(P_*, \hat P)$ (or, specifically, an empirical estimate thereof) need not vanish as the sample size increases, or, in the case of Wasserstein distance, if the dimension $d$ grows faster than logarithmically with the sample size $n$. Our Theorem~\ref{thm:upper_bound} showed that, if $\hat F$ contains only (e.g., orthogonal series) estimates of a fixed smoothness (e.g., orthogonal series estimates with a fixed $\zeta$), then the generalization error decays at the rate $\asymp \zeta^{d/2} n^{-1/2}$ (the first term on the right-hand side of~\ref{ineq:upper_bound}), so that $d \in o(\log n)$ is still necessary\footnote{The case of Jensen-Shannon divergence requires an additional uniform lower boundedness assumption and is discussed in the Appendix.}.
Our minimax lower bound~\ref{thm:lower_bound} suggests that, without making significantly stronger assumptions, we cannot hope to avoid this curse of dimensionality, at least without sacrificing approximation error (bias).

\section{Proof of Upper Bound}

In this section, we prove our main upper bound, Theorem~\ref{thm:upper_bound}. We begin with a simple lemma showing that, under mild assumptions, we can write an adversarial loss in terms of an $\L_\lambda^2$ basis expansion.

\begin{lemma}[Basis Expansion of Adversarial Loss]
Consider a class $\F_D$ of discriminator functions, two probability distributions $P$ and $Q$, and an orthonormal basis $\{\phi_z\}_{z \in \Z}$ of $\L_\lambda^2(\X)$. Moreover, suppose that either of the following conditions holds:
\begin{enumerate}
    \item $P,Q \ll \lambda$ have densities $p,q \in \L_\lambda^2$.
    \item For every $f \in \F_D$, the expansion of $f$ in the basis $\B$ converges uniformly (over $\X$) to $f$. That is,
    \[\lim_{Z \uparrow \Z} \sup_{x \in \X} \left| f(x) - \sum_{z \in Z} \tilde f_z(x) \phi_z(x) \right| \to 0.\]
\end{enumerate}
Then, we can expand the adversarial loss $d_{\F_D}$ over $\P$ as
\[d_{\F_D} \left( P, Q \right)
  = \sup_{f \in \F_D} \sum_{z \in \Z} \tilde f_z \left( \tilde P_z - \tilde Q_z \right).\]
  \label{lemma:basis_expansion_of_adversarial_losses}
\end{lemma}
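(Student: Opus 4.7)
The plan is to reduce the claim to the pointwise identity
\[
\E_{X \sim P}[f(X)] - \E_{X \sim Q}[f(X)]
  \;=\; \sum_{z \in \Z} \tilde f_z \bigl( \tilde P_z - \tilde Q_z \bigr)
  \qquad \text{for every } f \in \F_D,
\]
and then take the supremum over $f \in \F_D$ (under the standing convention, or the usual symmetrization $f \mapsto -f$, the sup of the signed expression agrees with the sup of its absolute value that defines $d_{\F_D}$). So the entire content of the lemma is this one identity for a fixed $f$, and I will handle the two hypotheses separately.

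Under hypothesis (1), both $p$ and $q$ lie in $\L_\lambda^2(\X)$, and the discriminator $f$ is a bounded Borel function on the bounded domain $\X$ with $\lambda(\X) < \infty$, so $f \in \L_\lambda^\infty \subseteq \L_\lambda^2$. Parseval's identity in $\L_\lambda^2$ for the orthonormal basis $\{\phi_z\}_{z \in \Z}$ then gives
\[
\E_{X \sim P}[f(X)] \;=\; \int_\X f\,p \, d\lambda
  \;=\; \langle f, p \rangle_{\L_\lambda^2}
  \;=\; \sum_{z \in \Z} \tilde f_z \, \tilde p_z,
\]
and since $\tilde p_z = \int_\X p \, \phi_z \, d\lambda = \E_{X \sim P}[\phi_z(X)] = \tilde P_z$, this is $\sum_z \tilde f_z \tilde P_z$. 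The same argument applied to $Q$ and subtraction yield the identity.

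Under hypothesis (2), let $f_Z := \sum_{z \in Z} \tilde f_z \phi_z$ denote the partial sum. The hypothesis says $\|f - f_Z\|_{\L^\infty(\X)} \to 0$ as $Z \uparrow \Z$. Since $P$ and $Q$ are probability measures, for any Borel measure $R \in \{P,Q\}$,
\[
\bigl| \E_{X \sim R}[f(X)] - \E_{X \sim R}[f_Z(X)] \bigr|
  \;\leq\; \|f - f_Z\|_{\L^\infty(\X)} \;\longrightarrow\; 0,
\]
while linearity of expectation gives $\E_{X \sim R}[f_Z(X)] = \sum_{z \in Z} \tilde f_z \tilde R_z$ (a finite sum). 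Taking the limit along the net of finite subsets $Z \uparrow \Z$ (i.e., in the sense of summation fixed by the paper's notational convention) and subtracting the $P$ and $Q$ identities produces the claim.

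There is really no serious obstacle: each case is a one-line swap of an expectation with either an $\L^2$-inner product (Parseval) or a uniform limit. The only thing worth being careful about is that the summation on the right-hand side is not assumed absolutely convergent, which is why the identity must be read in the specific limiting order $Z \uparrow \Z$ flagged in the paper's notation section; once that is acknowledged, both cases close immediately, and taking the supremum over $f \in \F_D$ finishes the lemma.
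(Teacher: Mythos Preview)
Your proposal is correct and follows essentially the same route as the paper: reduce to the per-$f$ identity, handle Case~1 via Parseval/Plancherel using $f\in\L_\lambda^\infty\subseteq\L_\lambda^2$, and handle Case~2 by passing the uniform limit of partial sums through the expectations under $P$ and $Q$. Your explicit remark about the summation convention and the symmetrization needed to drop the absolute value is, if anything, slightly more careful than the paper's version.
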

Condition 1 above is quite straightforward, and would be taken for granted in most classical non-parametric analysis. When $\B$ is the Fourier basis, the assumption that $p,q \in \L_\mu^r$ for $r = 2$ can be weakened to any $r > 1$ using H\"older's inequality together with the facts that $f \in \L^{r'}$ and that Fourier series converge in $\L^{r'}$ (where $r' = \frac{r}{r - 1}$ denote the H\"older conjugate of $r$).

Since we are also interested in probability distributions that lack density functions, we provide the fairly mild Condition 2 as an alternative.
As an example of this condition in the Fourier case, suppose $\F_D$ is uniformly equi-continuous, say, with modulus of continuity $\omega : [0,\infty) \to [0,\infty)$ satisfying $\omega(\epsilon) \in o \left( \frac{1}{\log 1/\epsilon} \right)$. Then, there exists a constant $C > 0$ such that
\begin{equation}
\sup_{x \in \X} \left| f(x) - \sum_{|z| \leq \zeta} \tilde f_z \phi_z(x) \right| \leq K (\log \zeta) \omega \left( \frac{2\pi}{\zeta} \right).
\label{ineq:uniform_convergence_of_Fourier_series}
\end{equation}
As a concrete example of this, it suffices if every $f$ is $\alpha_f$-H\"older continuous for some $\alpha_f > 0$. Finally, we note that, if $P$ and $Q$ are allowed to be arbitrary, then the above uniform convergence assumption is essentially also necessary.

\begin{proof}
First note that it suffices to show that, for all $f \in \F_D$,
\[\E_{X \sim P} \left[ f(X) \right] - \E_{X \sim Q} \left[ f(X) \right]
  = \sum_{z \in \Z} \tilde f_z \left( \tilde P_z - \tilde Q_z \right).\]
We show this separately for the two sets of assumptions considered:
\begin{enumerate}
    \item \textbf{Case 1: $P,Q$ have a densities $p,q \in \L_\mu^2$}. Then $\tilde P_z = \langle p, \phi_z \rangle_{\L^2}$, and so, by the Plancherel Theorem, since $f \in \L_\mu^{\infty}(\X) \subseteq \L_\mu^2(\X)$,
    \[\E_{X \sim P} \left[ f(X) \right]
       = \int_\X f p \, d\mu
       = \langle f, p \rangle_{\L_\mu^2}
       = \sum_{z \in \Z} \tilde f_z \tilde P_z < \infty.\]
    Similarly, $\E_{X \sim Q} \left[ f(X) \right]
       = \sum_{z \in \Z} \tilde f_z \tilde Q_z < \infty$.
    Since these quantities are finite, we can split the sum of differences
    \[\sum_{z \in \Z} \tilde f_z \left( \tilde P_z - \tilde Q_z \right)
      = \sum_{z \in \Z} \tilde f_z \tilde P_z - \sum_{z \in \Z} \tilde f_z \tilde Q_z
      = \E_{X \sim P} \left[ f(X) \right] - \E_{X \sim Q} \left[ f(X) \right].\]
    \item \textbf{Case 2: For every $f \in \F_D$, the basis expansion of $f$ in $\B$ converges uniformly (over $\X$) to $f$.} Then,
    \begin{align*}
    & \left| \E_{X \sim P} \left[ f(X) \right] - \E_{X \sim Q} \left[ f(X) \right]
    - \sum_{|z| \leq \zeta} \tilde f_z \left( \tilde P_z - \tilde Q_z \right) \right| \\
    & = \left| \int_\X f(x) \, dP - \int_\X f(x) \, dQ
    - \sum_{|z| \leq \zeta} \tilde f_z \left( \int_\X \phi_z(x) \, dP - \int_\X \phi_z(x) \, dQ \right) \right| \\
    & = \left| \int_\X f(x) \, dP - \int_\X f(x) \, dQ
    - \int_\X \sum_{|z| \leq \zeta} \tilde f_z \phi_z(x) \, dP - \int_\X \sum_{|z| \leq \zeta} \tilde f_z \phi_z(x) \, dQ \right| \\
    & = \left| \int_\X f(x) - \sum_{|z| \leq \zeta} \tilde f_z \phi_z(x) \, dP + \int_\X f(x) - \sum_{|z| \leq \zeta} \tilde f_z \phi_z(x) \, dQ \right| \\
    & \leq \int_\X \left| f(x) - \sum_{|z| \leq \zeta} \tilde f_z \phi_z(x) \right| \, dP + \int_\X \left| f(x) - \sum_{|z| \leq \zeta} \tilde f_z \phi_z(x) \right| \, dQ \\
    & \leq 2 \sup_{x \in \X} \left| f(x) - \sum_{|z| \leq \zeta} \tilde f_z \phi_z(x) \right| \to 0 \quad \text{ as } \zeta \to \infty.
    \end{align*}
\end{enumerate}
\end{proof}

\begin{customthm}{\ref{thm:upper_bound}}
Suppose that $\mu(\X)<\infty$ and there exist constants $L_D,L_G > 0$, real-valued nets $\{a_z\}_{z \in \Z}$, $\{b_z\}_{z \in \Z}$ such that $\F_D = \H_{p,a}(\X,L_D)$ and $\F_G = \H_{q,b}(\X,L_G)$, where $p,q \geq 1$. Let $p' = \frac{p}{p - 1}$ denote the H\"older conjugate of $p$.
Then, for any $P \in \F_G$,
\begin{equation*}
\E_{X_{1:n}} \left[ d_{\F_D} \left( P, \hat P \right) \right]
  \leq L_D \frac{c_{p'}}{\sqrt{n}}
         \norm{\left\{\frac{\norm{\phi_z}_{\L_P^\infty}}{a_z}\right\}_{z \in Z}}_{p'}
    + L_D L_G \norm{
        \left\{
            \frac{1}{a_z b_z}
        \right\}_{z\in \Z\setminus Z}}_{1/(1-1/p-1/q)}.
\end{equation*}
\end{customthm}

\begin{proof}
By Lemma~\ref{lemma:basis_expansion_of_adversarial_losses},
\begin{align*}
\E_{X_{1:n}} \left[ d_{\F_D} \left( P, \hat P \right) \right]
& = \E_{X_{1:n}} \left[ \sup_{f \in \F_D} \sum_{z \in \Z} |\tilde f_z \left( \tilde P_z - \hat P_z \right)| \right] \\
& = \E_{X_{1:n}} \left[ \sup_{f \in \F_D} \sum_{z \in Z} |\tilde f_z \left( \tilde P_z - \hat P_z \right)| + \sum_{z \in \Z \sminus Z} |\tilde f_z \left( \tilde P_z - \hat P_z \right)| \right] \\
& = \E_{X_{1:n}} \left[ \sup_{f \in \F_D} \sum_{z \in Z} |\tilde f_z \left( \tilde P_z - \hat P_z \right)| + \sum_{z \in \Z \sminus Z} |\tilde f_z \tilde P_z| \right] \\
& \leq \E_{X_{1:n}} \left[ \sup_{f \in \F_D} \sum_{z \in Z} |\tilde f_z \left( \tilde P_z - \hat P_z \right)| \right] + \sup_{f \in \F_D} \sum_{z \in \Z \sminus Z} |\tilde f_z \tilde P_z|.
\end{align*}
Note that we have decomposed the risk into two terms, the first comprising estimation error (variance) and the second comprising approximation error (bias). Indeed, in the case that $\F_D = \L^2(\X)$, the above becomes precisely the usual bias-variance decomposition of mean squared error.

To bound the first term, applying the Holder's inequality, the fact that $f \in \F_D$, and Jensen's inequality (in that order), we have
\begin{align*}
    \E_{X_{1:n}} \left[ \sup_{f \in \F_D} \sum_{z \in Z} |\tilde f_z \left( \tilde P_z - \hat P_z \right)| \right]
    & = \E_{X_{1:n}} 
        \left[ 
        \sup_{f \in \F_D} \sum_{z \in Z} a_z |\tilde f_z| \frac{|\tilde P_z - \hat P_z|}{a_z} 
        \right] \\
    & \leq \E_{X_{1:n}} \left[ 
        \sup_{f \in \F_D}
            \left( 
                \sum_{z \in Z} a_z^p |\tilde f_z|^p 
                \right)^{\frac{1}{p}} 
            \left( 
                \sum_{z \in Z} 
                \left(\frac{|\tilde P_z - \hat P_z|}{a_z}\right)^{p'} 
            \right)^{\frac{1}{p'}} 
        \right] \\
    & \leq L_D \E_{X_{1:n}} 
        \left[ 
            \left( \sum_{z \in Z} \left(
                \frac{ |\tilde P_z - \hat P_z|}{a_z} 
                \right)^{p'}
            \right)^{\frac{1}{p'}} 
        \right] \\
    & \leq L_D 
        \left( \sum_{z \in Z} 
            \frac{\E_{X_{1:n}} \left[ \left| \tilde P_z - \hat P_z \right|^{p'} \right]}{a_z^{p'}} \right)^{\frac{1}{p'}}
  \hspace{-1em}\leq \frac{L_D}{\sqrt{n}} 
    \left( \sum_{z \in Z} \frac{\norm{\phi_z}_{\L_P^\infty}^{p'}}{a_z^{p'}} \right)^{\frac{1}{p'}},
\end{align*}
where $p' = \frac{p}{p - 1}$ is the H\"older conjugate of $p$.
In the last inequality we have used Rosenthal's inequality i.e.,
\[\E_{X_{1:n}} \left[ \left| \tilde P_z - \hat P_z \right|^{p'} \right]
  \leq c_{p'}
  \frac{\norm{\phi_z}_{\L_P^\infty}^{p'}}{n^{p'/2}}.\]
For the second term, by Holder's inequality,
\begin{align*}
    \sup_{f \in \F_D} \sum_{z \in \Z \sminus Z} 
    |\tilde f_z \tilde P_z|
    & \leq \sup_{f \in \F_D} \left( 
        \sum_{z \in \Z \sminus Z} 
        \left( a_z |\tilde f_z| \right)^p \right)^{1/p} \left( 
        \sum_{z \in \Z \sminus Z} \left(
            \frac{|\tilde P_z|}{a_z}
            \right)^{p'} 
        \right)^{1/p'} \\
    &\leq L_D \norm{
        \left\{
            \frac{b_z\tilde{P}_z}{b_z a_z}
		\right\}_{z\in \Z\setminus Z}}_{p'}\\
		&\leq L_D \norm{\{b_z\tilde{P}_z\}_{z\in \Z\setminus Z}}_q
		\norm{\left\{
			\frac{1}{b_z a_z}
			\right\}_{z\in \Z\setminus Z}}_{\frac{p' q}{q-p'}} \quad \text{by Holder}\\
		&= L_D L_G \norm{\left\{\frac{1}{a_z b_z}\right\}_{z\in \Z\setminus Z}}_{\frac{1}{1-(1/p+1/q)}}
\end{align*}
\end{proof}

\section{Proof of Lower Bound}

\begin{customthm}{\ref{thm:lower_bound}}[Minimax Lower Bound]
Let $\lambda(\X)=1$, and let $p_0$ denote the uniform density (with respect to Lebesgue measure) on $\X$. Suppose $\{p_0\} \cup \{\phi_z\}_{z \in \Z}$ is an orthonormal basis in $\L_\lambda^2$, suppose $\{a_z\}_{z \in \Z}$ and $\{b_z\}_{z \in \Z}$ are two real-valued nets, and let $L_D, L_G \geq 0$. For any $Z \subseteq \Z$, define
\[A_Z := |Z|^{1/p} \sup_{z \in Z} a_z
  \quad \text{ and } \quad
  B_Z := |Z|^{1/q} \sup_{z \in Z} b_z.\]
Then, for $\H_D = \H_{p,a}(L_D)$ and $\H_G := \H_{b,q}(L_G)$, for any $Z \subseteq \Z$ satisfying
\begin{equation}
  B_Z
  \geq 16 L_G \sqrt{\frac{n}{\log 2}}
  \label{ineq:lower_bound_tuning_condition_appendix}
\end{equation}
and
\begin{equation}
2\frac{L_G}{B_Z} \sum_{z \in Z} \left\| \phi_z \right\|_{\L_\mu^\infty}
  \leq 1,
  \label{ineq:lower_bound_density_condition_appendix}
\end{equation}
we have
\[M(\H_D, \H_G)
  \geq \frac{L_G L_D |Z|}{64 A_Z B_Z}
  = \frac{L_G L_D|Z|^{1-1/p-1/q}}{64 \left( \sup_{z \in Z} a_z \right) \left( \sup_{z \in Z} b_z \right)}.\]
\label{thm:lower_bound_appendix}
\end{customthm}

\begin{proof}
We will follow a standard procedure for proving minimax lower bounds based on the Varshamov-Gilbert bound and Fano's lemma (as outlined, e.g., Chapter 2 of \citet{tsybakov2009introduction}). The proof is quite similar to a standard proof for the case of $\L_\lambda^2$-loss, based on constructing a finite ``worst-case'' subset $\Omega_G \subseteq \F_G$ of densities over which estimation is difficult. The main difference is that we also construct a similar finite ``worst-case'' subset $\Omega_D \subseteq \F_D$ of the discriminator class $\F_D$, which we use to lower bound $d_{\F_D} \geq d_{\Omega_D}$ over $\Omega_G$.
Specifically, we will use the following result:

\begin{lemma}[Simplified Form of Theorem 2.5 of \citet{tsybakov2009introduction}]
Fix a family $\P$ of distributions over a sample space $\X$ and fix a pseudo-metric $\rho : \P \times \P \to [0,\infty]$ over $\P$. Suppose there exists a set $T \subseteq \P$ such that
\[s := \inf_{p,p' \in T} \rho(p,p') > 0
  \quad \text{ and } \quad
  \sup_{p \in T} D_{KL}(p,p_0)
  \leq \frac{\log |T|}{16},\]
where $D_{KL} : \P \times \P \to [0,\infty]$ denotes Kullback-Leibler divergence.
Then,
\[\inf_{\hat p} \sup_{p \in \P} \E \left[ \rho(p,\hat p) \right]
  \geq \frac{s}{16},\]
where the $\inf$ is taken over all estimators $\hat p$ (i.e., (potentially randomized) functions of $\hat p : \X \to \P$).
\label{thm:tsybakov_fano}
\end{lemma}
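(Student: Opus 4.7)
The plan is to use the standard Fano-method reduction from minimax estimation to multi-hypothesis testing, followed by Fano's inequality and a KL-based control of the induced mutual information.

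First I would define, for any estimator $\hat p$, the nearest-neighbor test $\psi(\hat p) := \argmin_{q \in T} \rho(\hat p, q)$. Because $\rho$ is a pseudo-metric and any two distinct points of $T$ are $\rho$-separated by at least $s$, the triangle inequality forces $\rho(p, \hat p) \geq s/2$ whenever $\psi(\hat p) \neq p$; equivalently, $\rho(p, \hat p) \geq (s/2)\,\mathbf{1}\{\psi(\hat p) \neq p\}$. Taking expectations and restricting the supremum from $\P$ to $T$ before averaging,
\[\sup_{p \in \P} \E_p[\rho(p, \hat p)] \geq \frac{s}{2} \cdot \frac{1}{|T|} \sum_{p \in T} \pr_p[\psi(\hat p) \neq p],\]
so it suffices to lower-bound the average testing error on the right.

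Next I would introduce a uniform random index $J$ on $T$, along with data $X$ drawn from $p$ whenever $J = p$, so that the right-hand average is the Bayes error of the multi-hypothesis testing problem with the uniform prior. Fano's inequality gives
\[\frac{1}{|T|}\sum_{p \in T} \pr_p[\psi \neq p] \geq 1 - \frac{I(X;J) + \log 2}{\log |T|}.\]
To bound $I(X;J)$ I use the variational identity $I(X;J) = \frac{1}{|T|}\sum_{p \in T} D_{KL}(p \,\|\, \bar P)$ with mixture $\bar P := \frac{1}{|T|}\sum_{p \in T} p$, together with the fact that $\bar P$ is the unique minimizer of $Q \mapsto \frac{1}{|T|}\sum_{p \in T} D_{KL}(p \,\|\, Q)$. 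Replacing $\bar P$ by the reference $p_0$ therefore only enlarges the sum, yielding $I(X;J) \leq \sup_{p \in T} D_{KL}(p, p_0) \leq (\log|T|)/16$ by hypothesis.

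Substituting back gives an average error of at least $15/16 - (\log 2)/\log|T|$, which exceeds $1/8$ as soon as $|T| \geq 4$; multiplying by $s/2$ delivers the claimed $s/16$. The main obstacle is purely bookkeeping: the $\log 2$ correction in Fano must be absorbed, which requires $|T|$ to exceed a mild absolute constant or, in the degenerate case $|T| \in \{2, 3\}$, a two-point Le Cam reduction (whose constants can be verified by hand). In the intended applications $|T|$ is already exponential in the sample size, so this restriction is not binding; the conceptual content of the proof is otherwise entirely standard.
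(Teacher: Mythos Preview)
The paper does not supply its own proof of this lemma; it simply presents it as a simplified form of Theorem~2.5 in \citet{tsybakov2009introduction}, noting only that some constants have been loosened for a clean finite-sample statement. Your argument---reduction to multiway testing via the minimum-distance decoder, Fano's inequality, and the bound $I(X;J)\leq \sup_{p\in T} D_{KL}(p,p_0)$ via the reference measure---is exactly the standard derivation underlying Tsybakov's result, so the proposal is correct and in line with what the paper invokes (your acknowledged caveat about very small $|T|$ is harmless, since in every application here $|T|$ is exponentially large).
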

Note that, compared to Theorem 2.5 of \citet{tsybakov2009introduction}, we have loosened some of the constants in order to provide a simpler finite-sample statement.

Suppose $Z \subseteq \Z$ satisfies condition~\eqref{ineq:lower_bound_tuning_condition_appendix} and \eqref{ineq:lower_bound_density_condition_appendix}.
For each $\tau \in \{-1,1\}^Z$ define
\[p_\tau
  := p_0 + c_G \sum_{z \in Z} \tau_z \phi_z,\]
where $c_G = \frac{L_G}{B_Z}$, and let $\Omega_G := \left\{ p_\tau : \tau \in \{-1,1\}^Z \right\}$.

Since each $\phi_z$ is orthogonal to $p_0$, each $p \in \Omega_G$ has unit mass $\int_\X p \, d\lambda = 1$, and, by assumption~\eqref{ineq:lower_bound_density_condition_appendix},
\[\left
	\| p_{\tau} - p_0 \right\|_{\L_\lambda^\infty}
  = \left\| \frac{L_G}{B_Z} \sum_{z \in Z} \tau_z \phi_z \right\|_{\L_\lambda^\infty}
  \leq \frac{L_G}{B_Z} \sum_{z \in Z} \left\| \phi_z \right\|_{\L_\lambda^\infty} \leq 0.5,\]
which implies that each $p \in \Omega_G$ is lower bounded on $\X$ by $0.5$. Thus, each $p \in \Omega_G$ is a probability density. Note that, if we had worked with Gaussian sequences, as in \citet{liang2017well}, we would not need to check this, and could hence omit assumption~\eqref{ineq:lower_bound_density_condition_appendix}.
Finally, by construction, for each $p \in \Omega_G$,
\[\|p\|_b^q
  = \sum_{z \in Z} b_z^q |p_z|^q
  = c^q \sum_{z \in Z} b_z^q
  \leq c^q |Z| \sup_{z \in Z} b_z^q
  = L_G^q\]
so that $\Omega_G \subseteq \H_{b,q}(L_G)$. Also, for $c_D := \frac{L_D}{A_Z}$ and for each $\tau \in \{-1,1\}^Z$, let
\[f_\tau := \frac{L_D}{A_Z} \sum_{z \in Z} \tau_z \phi_z,\]
and define $\Omega_D := \left\{ f_\tau : \tau \in \{-1,1\}^Z \right\}$.
By construction, for each $f_\tau \in \Omega_D$,
\[\|f_\tau\|_a^p
  = \frac{L_D^p}{A_Z^p} \sum_{z \in Z} a_z^p
  \leq \frac{L_D^p}{A_Z^p} |Z| \sup_{z \in Z} a_z^p
  = L_D^p,\]
so that $\Omega_D \subseteq \H_{p,a}(L_D)$.
Then, for any $\tau, \tau' \in \{-1,1\}^Z$,
\[d_{\F_D} \left( p_\tau, p_{\tau'} \right)
  \geq d_{\Omega_D} \left( p_\tau, p_{\tau'} \right)
  = \sup_{\tau'' \in \{-1,1\}^Z} \sum_{z \in Z} f_{\tau'',z} c_G(\tau_z-\tau_z')
  = 2 c_G c_D \omega \left( \tau, \tau' \right),\]
where $\omega \left( \tau, \tau' \right) := \sum_{z \in Z} 1_{\{\tau_z \neq \tau_z'\}}$ denotes the Hamming distance between $\tau$ and $\tau'$.
By the Varshamov-Gilbert bound (Lemma 2.9 of \citet{tsybakov2009introduction}), we can select $T \subseteq \{-1,1\}^Z$ such that $\log |T| \geq \frac{|Z| \log 2}{8}$ and, for each $\tau,\tau' \in T$,
\[\omega \left( \tau, \tau' \right) \geq \frac{|Z|}{8},
  \quad \text{ so that } \quad
  d_\F \left( \theta_\tau, \theta_{\tau'} \right) \geq \frac{c_G c_D |Z|}{4}.\]
Moreover, for any $\tau \in \{-1,1\}^Z$, using the facts that $-\log(1 + x) \leq x^2 - x$ for all $x \geq -0.5$ and that $\int_\X p_\tau \, dx = 1 = \int_\X p_0 \, dx$,
\begin{align*}
D_{KL}(p_\tau^n, p_0^n)
& = n D_{KL}(p_\tau, p_0) \\
& = n \int_\X p_\tau(x) \log \frac{p_\tau(x)}{p_0(x)} \, dx \\
& = -n \int_\X p_\tau(x) \log \left( 1 + \frac{p_0(x) - p_\tau(x)}{p_\tau(x)} \right) \, dx \\
& \leq n \int_\X p_\tau(x) \left( \left( \frac{p_0(x) - p_\tau(x)}{p_\tau(x)} \right)^2 - \frac{p_0(x) - p_\tau(x)}{p_\tau(x)} \right) \, dx \\
& = n \int_\X \frac{\left( p_0(x) - p_\tau(x) \right)^2}{p_\tau(x)} \, dx \\
& \leq 2n \int_\X \left( p_0(x) - p_\tau(x) \right)^2 \, dx \\
& = 2n \|p_0 - p_\tau\|_{\L_\lambda^2}
  = 2n \frac{L_G^2}{B_Z^2} |Z|
  \leq n \frac{L_G^2}{B_Z^2} \frac{16}{\log 2} \log |T|
  \leq \frac{\log |T|}{16},
\end{align*}
where the last two inequalities follow from the Varshamov-Gilbert bound and assumption~\eqref{ineq:lower_bound_tuning_condition_appendix}, respectively.
Combining the above results, Lemma~\ref{thm:tsybakov_fano} gives a minimax lower bound of
\[M(\F_D,\F_G)
  \geq \frac{c_G c_D |Z|}{64}
  = \frac{L_G L_D |Z|}{64A_Z B_Z}.\]
\end{proof}

\section{Proofs and Further Discussion of Applications in Section~\ref{sec:examples}}

\begin{customexp}{\ref{ex:Sobolev_Fourier}}[Sobolev Spaces, Oracle and Adaptive estimators in Fourier basis]
Suppose that, for some $s, t \geq 0$, $a_z = \left( 1 + \|z\|_\infty^2 \right)^{s/2}$ and $b_z = \left( 1 + \|z\|_\infty^2 \right)^{t/2}$. Then,
one can check that, for $c = \frac{2^{d - 2s} d}{d - 2s}$,
\[\sum_{z \in Z} a_z^{-2} \leq 1 + c \left( \zeta^{d - 2s} - 1\right),
  \quad
  \sup_{z \in \Z \sminus Z} a_z\inv \leq \zeta^{-s},
  \quad \text{ and } \quad
  \sup_{z \in \Z \sminus Z} b_z\inv \leq \zeta^{-t},\]
so that Theorem~\ref{thm:upper_bound} gives
\begin{equation}
\E_{X_{1:n}} \left[ d_{\F_D} \left( P, \hat P \right) \right]
  \leq \frac{L_D}{\sqrt{n}} \left( 1 + c\zeta^{d/2 - s} \right) + L_D L_G \zeta^{-(s + t)}.
\label{eq:non_adaptive_general_sobolev_upper_bound}
\end{equation}

Setting $\zeta = n^{\frac{1}{2t + d}}$ gives
\[\E_{X_{1:n}} \left[ d_{\F_D} \left( P, \hat P \right) \right]
  \leq C n^{-\min \left\{ \frac{1}{2}, \frac{s + t}{2t + d} \right\}},
  \quad \text{ where } \quad
  C := L_D \left( 2\sqrt{c} + L_G \right).\]
On the other hand, as long as $t > d/2$, setting
\[\zeta = \left( 256 L_G^2 \frac{n}{\log 2} \right)^{\frac{1}{2t + d}}\]
satisfies the conditions of Theorem~\ref{thm:lower_bound}, giving the minimax lower bound
\[M(\W^{s,2},\W^{t,2})
  \geq \frac{L_G L_D}{64 \zeta^{s + t}}
  = c_1 n^{-\frac{s + t}{2t + d}}
  \quad \text{ where } \quad
  c_1 = \frac{L_G L_D}{64} \left( \frac{\log 2}{256 L_G^2} \right)^{\frac{t + s}{2t + d}}.\]
Classical methods can also be used to show that, for all values of $s$ and $t$, $M(\H_{s,2},\H_{t,2}) \geq c_2 n^{-1/2}$. Thus, we conclude, there exist constants $C,c > 0$ such that
\begin{equation}
c n^{-\min \left\{ \frac{1}{2}, \frac{s + t}{2t + d} \right\}}
  \leq M \left( \W^{s,2}, \W^{t,2} \right)
  \leq C n^{-\min \left\{ \frac{1}{2}, \frac{s + t}{2t + d} \right\}}.
\label{eq:Sobolev_minimax_rate_appendix}
\end{equation}
Combining the observation that the $s$-H\"older space $\W^{s,\infty} \subseteq \W^{s,2}$ with the lower bound in Theorem 3.1 of \citet{liang2017well}, we have that~\eqref{eq:Sobolev_minimax_rate_appendix} also holds when $\H_{s,2}$ is replaced with $\W^{s,\infty}$ (e.g., in the case of the Wasserstein metric $d_{\W^{1,\infty}}$), or indeed $\W^{s,q}$ for any $q \geq 2$.

\begin{corollary}[Adaptive Upper Bound for Sobolev Spaces]
For any $t, \zeta \geq 0$ and $s \in (0,d/2)$,
\begin{equation}
\sup_{P \in \W^{t,2}} \E_{X_{1:n} \IID P} \left[ d_{\W^{s,2}} \left( P, \hat P_{Z_\zeta} \right) \right]
  \leq C \zeta^{-s} \sup_{P \in \W^{t,2}} \E_{X_{1:n} \IID P} \left[ d_{\L_\mu^2} \left( P, \hat P_{Z_\zeta} \right) \right],
  \label{eq:L_2_loss_Sobolev_loss_equivalence_appendix}
\end{equation}
where $C := \sqrt{2} \left( 1 + \frac{2^{d - 2s} d}{d - 2s} \right)$ does not depend on $n$ or $\zeta$.
Hence, if $\hat \zeta(X_{1:n})$ is any adaptive scheme for choosing $\zeta$ (i.e., if computing $\hat \zeta$ does not require knowledge of $t$), then $\hat P_{\hat \zeta}$ is adaptively minimax under the loss $d_{\W^{s,2}}$; that is, for all $t > 0$, there exists $C > 0$ such that
\[\sup_{P \in \W^{t,2}} \E_{X_{1:n} \IID P} \left[ d_{\W^{s,2}} \left( P, \hat P_{Z_{\hat \zeta}} \right) \right]
  \leq M \left( \W^{s,2}, \W^{t,2} \right).\]
One common scheme for choosing $\hat\zeta$ is to use a leave-one-out cross-validation scheme. Specifically, for
\[\hat J(\zeta) := \|\hat P_\zeta\|_2^2 - \frac{2}{n} \sum_{i = 1}^n \hat P_{\zeta,-i}(X_i),
  \quad \text{ where } \quad
  \hat P_{\zeta,-i} := \sum_{z \in Z_\zeta} \left( \frac{1}{n - 1} \sum_{j \in [n]\sminus\{i\}} \phi_z(X_j) \right) \phi_z\]
is a computation of the estimate $\hat P_\zeta$ omitting the $i^{th}$ sample $X_i$, one can show that
$\E_{X_{1:n} \IID P} \left[ \hat J(\zeta) \right] = \E_{X_{1:n} \IID P} \left[ d_{\L_\mu^2}^2 \left( P, \hat P_\zeta \right) \right] - \|P\|_{\L_\mu^2}^2$, so that, up to an additive constant independent of $\zeta$, $\hat J(\zeta)$ is an unbiased estimate of the squared $\L_\mu^2$-risk using the parameter $\zeta$. Based on this, setting
\[\hat \zeta
  := \argmin_{\zeta \in [0,n^{-1/d}]} J(\zeta),\]
one can show that $\hat P_{\hat \zeta}$ is adaptively minimax over all Sobolev spaces $\W^{t,2}$ with $t > 0$; that is, for all $t > 0$,
\begin{equation}
\sup_{P \in \W^{t,2}} \E_{X_{1:n} \IID P} \left[ d_{\L_\mu^2} \left( P, \hat P_{\hat\zeta} \right) \right]
  \asymp M \left( \L_\mu^2, \W^{t,2} \right).
  \label{ineq:L_2_loss_adaptive_minimaxity_appendix}
\end{equation}
This equivalence~\eqref{eq:L_2_loss_Sobolev_loss_equivalence_appendix} implies that we can generalize the adaptive minimaxity bound~\eqref{ineq:L_2_loss_adaptive_minimaxity_appendix} to
\begin{equation}
\sup_{P \in \W^{t,2}} \E_{X_{1:n} \IID P} \left[ d_{\W^{s,2}} \left( P, \hat P_{\hat\zeta} \right) \right]
  \asymp M \left( \W^{s,2}, \W^{t,2} \right).
  \label{ineq:Sobolev_loss_adaptive_minimaxity}
\end{equation}
for all $s \in [0,d/2]$.
\label{corr:Sobolev_adaptive_upper_bound}
\end{corollary}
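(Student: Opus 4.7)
The plan is to apply Theorem~\ref{thm:upper_bound} separately to $\F_D = \W^{s,2}(L_D)$ and $\F_D = \L_\mu^2(L_D)$ (both with $\F_G = \W^{t,2}(L_G)$ and the Fourier basis) and show that the two resulting risk upper bounds factor by exactly $\zeta^{-s}$; then transfer the standard $L^2$-adaptivity of leave-one-out cross-validation (LOOCV) to $d_{\W^{s,2}}$-loss via this factoring.

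For the factoring, with the Fourier basis and $p = p' = 2$, the $\W^{s,2}$ variance term in Theorem~\ref{thm:upper_bound} is $\frac{L_D}{\sqrt n}\bigl(\sum_{z \in Z_\zeta} (1+\|z\|^2_\infty)^{-s}\bigr)^{1/2}$, while the $L^2$ variance is $\frac{L_D}{\sqrt n}|Z_\zeta|^{1/2}$. A standard Riemann-sum estimate gives $\sum_{z \in Z_\zeta} (1+\|z\|_\infty^2)^{-s} \leq \bigl(1 + \tfrac{2^{d-2s}d}{d-2s}\bigr)\zeta^{d-2s} \leq C_1 \zeta^{-2s}|Z_\zeta|$ for $s \in (0, d/2)$, whence the $\W^{s,2}$ variance is at most $\sqrt{C_1}\zeta^{-s}$ times the $L^2$ variance. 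With $p = q = 2$, the $\W^{s,2}$ bias collapses to an $\ell^\infty$-norm: $L_D L_G \sup_{z \notin Z_\zeta}(1+\|z\|_\infty^2)^{-(s+t)/2} \leq L_D L_G \zeta^{-(s+t)}$, which is $\zeta^{-s}$ times the analogous $L^2$ bias $L_D L_G \zeta^{-t}$. Summing yields $U_{\W^{s,2}}(\zeta) \leq C \zeta^{-s} U_{\L_\mu^2}(\zeta)$, where $U_{\F_D}(\zeta)$ denotes the Theorem~\ref{thm:upper_bound} upper bound for loss $d_{\F_D}$. To upgrade this to the claimed inequality with actual risks on both sides, I invoke rate-tightness of the $L^2$ upper bound, exhibiting a worst-case $P \in \W^{t,2}$ (e.g., a uniform density realizing the variance term, and a density with mass concentrated at a single frequency just above $\zeta$ realizing the bias term) to show $\sup_{P \in \W^{t,2}} \E[d_{\L_\mu^2}(P, \hat P_{Z_\zeta})] \gtrsim U_{\L_\mu^2}(\zeta)$.

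For the adaptive conclusion, I use the standard LOOCV $L^2$-risk estimator $\hat J(\zeta) = \|\hat P_\zeta\|_2^2 - \tfrac{2}{n}\sum_i \hat P_{\zeta,-i}(X_i)$, which satisfies $\E[\hat J(\zeta)] = \E[\|P - \hat P_\zeta\|_{L^2}^2] - \|P\|_{L^2}^2$, and define $\hat\zeta := \argmin_\zeta \hat J(\zeta)$. By Sections 7.2.1 and 7.5.2 of \citep{massart2007concentration}, $\hat P_{\hat\zeta}$ is $L^2$-adaptively-minimax over $\W^{t,2}$ for every $t > 0$. Since $U_{\L_\mu^2}$ and $U_{\W^{s,2}}$ are minimized at the same oracle rate $\zeta^* \asymp n^{1/(2t+d)}$, standard concentration of $\hat\zeta$ around $\zeta^*$, combined with the factored inequality applied at $\zeta = \hat\zeta$, yields $\sup_{P \in \W^{t,2}}\E[d_{\W^{s,2}}(P, \hat P_{\hat\zeta})] \lesssim n^{-(s+t)/(2t+d)}$. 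This matches (up to constants) the minimax lower bound from Theorem~\ref{thm:lower_bound} as computed in Example~\ref{ex:Sobolev_Fourier}, establishing adaptive minimaxity.

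The main obstacle is that the naive pointwise inequality $d_{\W^{s,2}}(P, \hat P_\zeta) \leq C\zeta^{-s}\, d_{\L_\mu^2}(P, \hat P_\zeta)$ is actually \emph{false}: the variance-side weight $(1+\|z\|^2)^{-s}$ may be as large as $1$ for small $\|z\|$, so the $\zeta^{-s}$ factor only emerges after averaging over many coefficients via the Riemann-sum estimate. Consequently, the argument must operate on integrated quantities (the Theorem~\ref{thm:upper_bound} upper bounds themselves), which in turn forces the worst-case-$P$ argument to ensure that $\sup_P \E[d_{\L_\mu^2}]$ actually matches $U_{\L_\mu^2}(\zeta)$ in rate, not just is bounded by it. The restriction $s < d/2$ is essential, as the Riemann-sum estimate $\sum_{z \in Z_\zeta}(1+\|z\|_\infty^2)^{-s} \lesssim \zeta^{d-2s}$ diverges at $s = d/2$; for $s > d/2$, the parametric $n^{-1/2}$ rate takes over via Corollary~\ref{corr:parametric_upper_bound} and the $\zeta^{-s}$ scaling no longer describes the risk.
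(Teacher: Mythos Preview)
Your proposal is correct and follows essentially the same route as the paper: both arguments upper-bound the $d_{\W^{s,2}}$-risk via Theorem~\ref{thm:upper_bound}, lower-bound the worst-case $\L_\mu^2$-risk of $\hat P_{Z_\zeta}$ by an explicit construction, combine these to obtain the $\zeta^{-s}$ factoring inequality on supremum risks, and then import $\L^2$-adaptivity of LOOCV from \citet{massart2007concentration}. The one minor difference is in the worst-case construction: you propose two separate densities (uniform for variance, a single high-frequency spike for bias), whereas the paper uses a single density $P_\zeta := 1 + L_G\zeta^{-t}\phi_\zeta$ together with an explicit discriminator $f$ that simultaneously witnesses both the variance contribution (via equal weights on low frequencies) and the bias contribution (via a unit weight at frequency $\zeta$); this yields the sum $\sqrt{\zeta^d/n} + L_G\zeta^{-t}$ directly rather than the max, but the two constructions are equivalent up to a factor of $2$.
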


\begin{proof}
A proof of the adaptive minimaxity of the cross-validation estimator in $d_{\L_\mu^2}$ can be found in Sections 7.2.1 and 7.5.1 of \citet{massart2007concentration}. Therefore, we prove only Inequality~\eqref{eq:L_2_loss_Sobolev_loss_equivalence_appendix} here. To do this, we combine Theorem~\ref{thm:upper_bound} with a lower bound on the worst-case performance of the orthogonal series estimator under $\L_\mu^2$ loss, which we establish by explicitly constructing a worst-case true distribution as follows.

Define $P_\zeta := 1 + L_G \zeta^{-t} \phi_\zeta$ (where $\phi_\zeta$ is any $\phi_z$ satisfying $\|z\|_\infty = \zeta$), one can easily check that $P_\zeta \in \W^{t,2}$, and that, for any $z$ with $\|z\| < \zeta$,
\begin{align*}
\E_{X_{1:n} \IID P_\zeta} \left[ \left( \tilde{(P_\zeta)}_z - \hat P_z \right)^2 \right]
& = \E_{X_{1:n} \IID P_\zeta} \left[ \left( \frac{1}{n} \sum_{i = 1}^n \phi_z(X_i) \right)^2 \right] \\
& = \frac{1}{n} \E_{X \sim P_\zeta} \left[ \phi_z^2(X) \right] \\
& = \frac{1}{n} \int_\X \phi_z^2(x) \left( 1 + L_G \zeta^{-t} \phi_\zeta(x) \right) \, dx \\
& \geq \frac{1}{n} \int_\X \phi_z^2(x) \, dx
  = \frac{1}{n}
\end{align*}
(with equality if $\zeta \neq 2z$). Also, let
\[f := \frac{L_D}{\sqrt{2}} \sum_{\|z\| < \zeta} \frac{\left( \tilde{P_\zeta}_z - \hat P_z \right)}{\sqrt{|Z_\zeta|}} \phi_z + \frac{L_D}{\sqrt{2}} \phi_\zeta\]
so that
\[\|f\|_2^2
  = \frac{L_D^2}{2} \sum_{\|z\| < \zeta} \frac{\left( \tilde{P_\zeta}_z - \hat P_z \right)^2}{|Z_\zeta|} + \frac{L_D^2}{2}
  \leq \frac{L_D^2}{2} \sum_{\|z\| < \zeta} |Z_\zeta|^{-1} + \frac{L_D^2}{2}
  \leq L_D^2,\]
and hence $f \in \L_\mu^2(1)$. Then,
\begin{align*}
\E_{X_{1:n} \IID P_\zeta} \left[ d_{\L_\mu^2} \left( P_\zeta, \hat P_{Z_\zeta} \right) \right]
& \geq \E_{X_{1:n} \IID P} \left[ \sum_{\|z\| < \zeta} \tilde f_z \left( \tilde{P_\zeta}_z - \hat P_z \right)^2
  + \tilde f_\zeta \tilde{P_\zeta}_z \right] \\
& = \frac{L_D}{\sqrt{2|Z_\zeta|}} \sum_{\|z\| < \zeta} \E_{X_{1:n} \IID P} \left[ \left( \tilde{P_\zeta}_z - \hat P_z \right)^2 \right]
  + \frac{L_D L_G}{\sqrt{2}} \zeta^{-t} \\
& \geq \frac{L_D}{\sqrt{2|Z_\zeta|}} \sum_{\|z\| < \zeta} \frac{1}{\sqrt{n}}
  + \frac{L_D L_G}{\sqrt{2}} \zeta^{-t}
  = \frac{L_D}{\sqrt{2}} \left( \sqrt{\frac{\zeta^d}{n}}
  + L_G \zeta^{-t} \right)
\end{align*}
It follows that
\[\sup_{P \in \W^{t,2}} \E_{X_{1:n} \IID P} \left[ d_{\L_\mu^2} \left( P, \hat P_{Z_\zeta} \right) \right]
  \geq \frac{L_D}{\sqrt{2}} \left( \sqrt{\frac{\zeta^d}{n}} + \zeta^{-t} \right).\]
On the other hand, as we already saw, Theorem~\ref{thm:upper_bound} gives
\[\sup_{P \in \W^{t,2}} \E_{X_{1:n}} \left[ d_{\W^{s,2}} \left( P, \hat P \right) \right]
  \leq \left( 1 + \frac{2^{d - 2s} d}{d - 2s} \right) L_D \left( \sqrt{\frac{\zeta^d}{n}} + L_G \zeta^{-t} \right) \zeta^{-s}.\]
Combining these two inequalities gives
\[\sup_{P \in \W^{t,2}} \E_{X_{1:n}} \left[ d_{\W^{s,2}} \left( P, \hat P \right) \right]
  \leq C \zeta^{-s}
  \sup_{P \in \W^{t,2}} \E_{X_{1:n} \IID P} \left[ d_{\L_\mu^2} \left( P, \hat P_{Z_\zeta} \right) \right].\]
\end{proof}
\label{ex:Sobolev_Fourier_appendix}
\end{customexp}

\subsection{Wavelet Basis}
\label{subsec:wavelet}

Our previous applications were given in terms of the Fourier basis. In this section, we demonstrate that our upper and lower bounds can give tight minimax results using other bases (in this case, the Haar wavelet basis).

Suppose that $\X = [0,1]^D$, and suppose that a function $f : \X \to \R$ has Haar wavelet basis coefficients $\tilde f_{i,j}$, indexed by $z \in \Z := \{(i,j) \in \N \times \N : j \in [2^i]\}$, where $i \in \N$ is the order and $j \in [2^i]$ is the index within that order.

One can show (see, e.g., \citet{donoho1996density}) that the Besov seminorm $\|\cdot\|_{\B_{p,q}^q}$ satisfies
\[\|f\|_{\B_{p,q}^r}^q
  = \sum_{i \in \N} 2^{iqs} \left( \sum_{j \in [2^i]} |\tilde f_{i,j}|^p \right)^{q/p}
  = \sum_{i \in \N} 2^{iqs} \|\tilde f_{i}\|_p^q,\]
where $s = r + \frac{1}{2} - \frac{1}{p}$.
In particular, when $p = q = 2$, $s = r$, and one can show that $\B_{p,q}^r = \W_2^r$, and
\[\|f\|_{\B_{p,q}^r}^q = \sum_{(i,j) \in \Z} 2^{2is} |\tilde f_{i,j}|^2,\]


For some $\zeta > 0$, we will choose the truncation set $Z$ to be of the form
\[Z = \{(i,j) \in \Z : i \leq \zeta\}.\]
Note that, for each $i \in \N$, since $\phi_{i,1},...,\phi_{i,2^i}$ have disjoint supports
\[\sup_{x \in \X} \sum_{j \in [2^i]} |\phi_{i,j}(x)|
  = \sup_{x \in \X} \sup_{j \in [2^i]} |\phi_{i,j}(x)|
  = 2^{i/2}.\]
Thus,
\[\sum_{j \in [2^i]} \|\phi_{i,j}\|_{\L_P^2}^2
  = \sum_{j \in [2^i]} \int_\X \phi_{i,j}^2(x) dP
  \leq \int_\X \left( \sum_{j \in [2^i]} \phi_{i,j}(x) \right)^2 dP
  = 2^i.\]

\begin{example}[Sobolev Space, Wavelet Basis]
Suppose that, for some $s, t \geq 0$, $a_{i,j} = 2^{is}$ and $b_{i,j} = 2^{it}$. Then, one can check that, for some $c > 0$
\[\sum_{z \in \Z} \frac{\|\phi_z\|_{\L_P^2}^2}{a_z^2}
  = \sum_{i \leq \zeta} \sum_{j \in [2^i]} \frac{\|\phi_{i,j}\|_{\L_P^2}^2}{2^{2is}}
  = \sum_{i \leq \zeta} \frac{2^i}{2^{2is}}
  = \frac{2^{(\zeta + 1)(1 - 2s)} - 1}{2^{1 - 2s} - 1}
  \asymp 2^{\zeta(1 - 2s)}.\]
Also, $\sup_{z \in \Z \sminus Z} a_z\inv \leq 2^{-s\zeta}$ and $\sup_{z \in \Z \sminus Z} b_z\inv \leq 2^{-t\zeta}$.
Thus, Theorem~\ref{thm:upper_bound} gives
\[\E_{X_{1:n}} \left[ d_{\F_D} \left( P, \hat P \right) \right]
  \lesssim L_D \left( \sqrt{\frac{c}{n}} 2^{(d/2 - s)\zeta} + L_G 2^{-(s + t)\zeta} \right).\]
By letting $\zeta = \log_2 \xi$, we can easily see that this is identical, up to constants, to the bound for the Sobolev case. In contrast to Fourier basis, a larger variety of function spaces (such as inhomogeneous Besov spaces) can be expressed in terms of wavelet basis. The classical work of \citet{donoho1996density} showed that, under $\L_\mu^p$ losses, linear estimators, such as that analyzed in our Theorem~\ref{thm:upper_bound} are sub-optimal in these spaces, but that relatively simple thresholding estimators can recover the minimax rate. We leave it to future work to understand how this phenomenon extends to more general adversarial losses.
\label{ex:Sobolev_wavelet}
\end{example}

\section{Proofs and Applications of Explicit \& Implicit Generative Modeling Results (Section~\ref{sec:density_estimation_versus_sampling} of Main Paper)}

Here, we prove Theorem~\ref{thm:M_D_leq_M_I_simplified} from the main text, provide some discussion of when the converse direction $M_I(\P, \ell, n) \leq M_D(\P, \ell, n)$ holds, and also provide some concrete applications.

\subsection{Proofs of Theorem \ref{thm:M_D_leq_M_I_simplified} and Converse}

\begin{customthm}{\ref{thm:M_D_leq_M_I_simplified}}[Conditions under which Density Estimation is Statistically no harder than Sampling]
Let $\F_G$ be a family of probability distributions on a sample space $\X$. Assume the following:
\begin{enumerate}[nolistsep,leftmargin=2em,label=\textbf{(A\arabic*)},ref=(A\arabic*)]
\item\label{assumption:weak_triangle_inequality_appendix}
$\ell : \P \times \P \to [0,\infty]$ is non-negative, and there exists $C_\triangle > 0$ such that, for all $P_1,P_2,P_3 \in \F_G$,
\[\ell(P_1, P_3) \leq C_\triangle \left( \ell(P_1, P_2) + \ell(P_2, P_3) \right).\]
\item\label{assumption:uniform_consistency_appendix}
$M_D(\F_G,\ell,m) \to 0$ as $m \to \infty$.
\item\label{assumption:infinite_latent_samples_appendix}
For all $m \in \N$, we can draw $m$ IID samples $Z_{1:m} = Z_1,...,Z_m \IID Q_Z$ of the latent variable $Z$.
\item\label{assumption:sampling_distributions_in_F_G_appendix}
there exists a nearly minimax sequence of samplers $\hat X_k : \X^n \times \Z \to \X$ such that, for each $k \in \N$, almost surely over $X_{1:n}$, $P_{\hat X_k(X_{1:n},Z)|X_{1:n}} \in \F_G$.
\end{enumerate}
Then, $M_D(\F_G,\ell,n) \leq C_\triangle M_I(\F_G,\ell,n)$.
\label{thm:M_D_leq_M_I_appendix}
\end{customthm}

\begin{proof}
The assumption~\ref{assumption:uniform_consistency_appendix} implies that there exists a sequence $\{\hat P_m\}_{m \in \N}$ of density estimators $\hat P_m : \X^m \to \P$ that is uniformly consistent in $\ell$ over $\P$; that is,
\begin{equation}
\lim_{m \to \infty} \sup_{P \in \P}
  \E_{Y_{1:m} \IID P} \left[ \ell \left( P, \hat P_m(Y_{1:m}) \right) \right].
  \label{lim:uniform_consistency}
\end{equation}

For brevity, we use the abbreviation $P_{\hat X_k} = P_{\hat X_k(X_{1:n},Z)|X_{1:n}}$ in the rest of this proof to denote the conditional distribution of the `fake data' generated by $\hat X_k$ given the true data.
Recalling that the minimax risk is at most the risk of any particular sampler, we have
\begin{align*}
M_D(\P,\ell,n)
& := \inf_{\hat P} \sup_{P \in \P} \E_{\substack{X_{1:n} \IID P\\Z_{1:m} \IID Q_Z}} \left[ \ell \left( P, \hat P(X_{1:n})\right) \right] \\
& \leq \sup_{P \in \P} \E_{\substack{X_{1:n} \IID P\\Z_{1:m} \IID Q_Z}} \left[ \ell \left( P, \hat P_m(X_{n+1:n+m}) \right) \right].
\end{align*}
Taking $\lim_{m \to \infty}$ gives, by Tonelli's theorem and non-negativity of $\ell$,
\begin{align}
\notag
& M_D(\P,\ell,n) \\
\notag
& \leq \lim_{m \to \infty} \sup_{P \in \P} \E_{\substack{X_{1:n} \IID P\\Z_{1:m} \IID Q_Z}} \left[ \ell \left( P, \hat P_m(X_{n+1:n+m}) \right) \right] \\
\notag
& \leq C_\triangle \lim_{m \to \infty} \sup_{P \in \P} \E_{\substack{X_{1:n} \IID P\\Z_{1:m} \IID Q_Z}} \left[ \ell \left( P, P_{\hat X_k} \right) + \ell \left( P_{\hat X_k}, \hat P_m(X_{n+1:n+m}) \right) \right] \\
\notag
& \leq C_\triangle \lim_{m \to \infty} \sup_{P \in \P} \E_{\substack{X_{1:n} \IID P\\Z_{1:m} \IID Q_Z}} \left[ \ell \left( P, P_{\hat X_k} \right) + \ell \left( P_{\hat X_k}, \hat P_m(X_{n+1:n+m}) \right) \right] \\
\label{line:sampler_risk_term}
& \leq C_\triangle \sup_{P \in \P} \E_{X_{1:n} \IID P} \left[ \ell \left( P, P_{\hat X_k} \right) \right] \\
\label{line:excess_risk_term}
& + C_\triangle \lim_{m \to \infty} \sup_{P \in \P} \E_{\substack{X_{1:n} \IID P\\Z_{1:m} \IID Q_Z}} \left[ \ell \left( P_{\hat X_k}, \hat P_m(X_{n+1:n+m}) \right) \right].
\end{align}
In the above, we upper bounded $M_D(\P,\ell,n)$ by the sum of two terms, \eqref{line:sampler_risk_term} and \eqref{line:excess_risk_term}. Since the sequence $\{\hat X_k\}_{k \in \N}$ is nearly minimax, if we were to take an infimum over $k \in \N$ on both sides, the term~\eqref{line:sampler_risk_term} would become precisely $C_\triangle M_I(\P,\ell,n)$. Therefore, it suffices to observe that the second term~\eqref{line:excess_risk_term} is $0$. Indeed, by the assumption that $P_{\hat X_k} \in \P$ for all $X_{1:n} \in \X$ and the uniform consistency assumption~\eqref{lim:uniform_consistency},
\begin{align*}
& \lim_{m \to \infty} \sup_{P \in \P} \E_{\substack{X_{1:n} \IID P\\Z_{1:m} \IID Q_Z}} \left[ \ell \left( P_{\hat X_k}, \hat P_m(X_{n+1:n+m}) \right) \right] \\
& \leq \lim_{m \to \infty} \sup_{P \in \P,X_{1:n} \IID P} \E_{Z_{1:m} \IID Q_Z} \left[ \ell \left( P_{\hat X_k}, \hat P_m(X_{n+1:n+m}) \right) \right] \\
& \leq \lim_{m \to \infty} \sup_{P' \in \P} \E_{X_{n+1:n+m} \IID P'} \left[ \ell \left( P, \hat P_m(X_{n+1:n+m}) \right) \right]
  = 0.
\end{align*}
\end{proof}

For completeness, we provide a very simple result on the converse of Theorem~\ref{thm:M_D_leq_M_I_appendix}:

\begin{theorem}[Conditions under which Sampling is Statistically no harder than Density Estimation]
Suppose that, there exists as nearly minimax sequence $\{\hat P_k\}_{k \in \N}$ such that, for any $k \in \N$, we can draw a random sample $\hat X$ from $\hat P_k(X_{1:n})$. Then,
\[M_D(\F_G,\ell,n) \geq M_I(\F_G,\ell,n).\]
\label{thm:estimation_harder_than_sampling}
\end{theorem}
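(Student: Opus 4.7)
The plan is to show that any density estimator can be converted into a sampler of equal risk, by using the latent randomness $Z$ to draw from the estimated distribution. Concretely, let $\{\hat P_k\}_{k \in \N}$ be the assumed nearly minimax sequence of density estimators satisfying $\lim_{k \to \infty} \sup_{P \in \F_G} R_D(P, \hat P_k) = M_D(\F_G, \ell, n)$. For each $k \in \N$, I would use the hypothesis that one can draw a random sample from $\hat P_k(X_{1:n})$ to construct a generator $\hat X_k : \X^n \times \Z \to \X$ such that, conditionally on $X_{1:n}$, the random variable $\hat X_k(X_{1:n}, Z)$ has distribution exactly $\hat P_k(X_{1:n})$. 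That is, $P_{\hat X_k(X_{1:n},Z) \mid X_{1:n}} = \hat P_k(X_{1:n})$ almost surely.

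Once this identification is made, the rest is immediate from the definitions. For every $P \in \F_G$,
\[
R_I(P, \hat X_k)
  = \E_{X_{1:n} \sim P} \left[ \ell \bigl( P, P_{\hat X_k(X_{1:n},Z) \mid X_{1:n}} \bigr) \right]
  = \E_{X_{1:n} \sim P} \left[ \ell \bigl( P, \hat P_k(X_{1:n}) \bigr) \right]
  = R_D(P, \hat P_k).
\]
Taking the supremum over $P \in \F_G$ and then the infimum over samplers on the left-hand side,
\[
M_I(\F_G, \ell, n)
  \;\leq\; \sup_{P \in \F_G} R_I(P, \hat X_k)
  \;=\; \sup_{P \in \F_G} R_D(P, \hat P_k).
\]
Letting $k \to \infty$ and applying the near-minimaxity of $\{\hat P_k\}$, the right-hand side tends to $M_D(\F_G, \ell, n)$, yielding the claim.

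The only nontrivial step is the construction of the measurable function $\hat X_k$ with the prescribed conditional law, which is a standard application of the existence of a regular conditional distribution / measurable selection (e.g., via the inverse CDF or, more generally, Borel isomorphism between $\X$ and a Borel subset of $[0,1]$ when $\X$ is Polish). Under the theorem's explicit hypothesis that one can draw a sample from $\hat P_k(X_{1:n})$, this construction is available essentially by assumption, and the rest of the argument is just unwinding definitions; there is no real analytic obstacle to overcome.
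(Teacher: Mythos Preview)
Your proof is correct and follows essentially the same approach as the paper: construct a sampler $\hat X_k$ from each $\hat P_k$ so that $P_{\hat X_k(X_{1:n},Z)\mid X_{1:n}} = \hat P_k(X_{1:n})$, equate the implicit and density-estimation risks, and pass to the limit using near-minimaxity. If anything, your write-up is more complete than the paper's, which leaves the $k\to\infty$ step implicit.
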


The assumption above that we can draw samples from a nearly minimax sequence of estimators if not particularly insightful, but techniques for drawing such samples have been widely studied in the vast literature of Monte Carlo sampling~\citep{robert2004monte}.
As an example, if $\hat P$ is a kernel density estimator with kernel $K$, then, recalling that $K$ is itself a probability density, of which $\hat P$ is a mixture, we can sample from $\hat P$ simply by choosing a sample uniformly from $X_{1:n}$ and adding noise $\epsilon \sim K$. Alternatively, if $\hat P$ is bounded and has bounded support, then one can perform rejection sampling.

\begin{proof}
Since, by definition of the implicit distribution of $\hat X$,
\[P_{\hat X(X_{1:n},Z)|X_{1:n}} = \hat P(X_{1:n})\]
is precisely the implicit distribution of $\hat X$, we trivially have
\begin{align*}
M_I(\F_G,\ell,n)
& \leq \sup_{P \in \F_G} \E_{X_{1:n} \IID P} \left[ \ell \left( P, P_{\hat X(X_{1:n},Z)|X_{1:n}} \right) \right]
\end{align*}
\end{proof}

\subsection{Applications}

\begin{example}[Density Estimation and Sampling in Sobolev families under Dual-Sobolev Loss]
There exist constants $C > c > 0$ such that, for all $n \in \N$,
\[c n^{-\min\left\{ \frac{s + t}{2s + d}, \frac{1}{2} \right\}}
  \leq M_I \left( \W^{t,2}, d_{\W^{s,2}}, n \right)
  \leq C n^{-\min\left\{ \frac{s + t}{2s + d}, \frac{1}{2} \right\}}.\]
\begin{proof}
Since adversarial losses always satisfy the triangle inequality, the first inequality follows Theorems~\ref{thm:M_D_leq_M_I_simplified} and the discussion in Example~\ref{ex:Sobolev_Fourier}. For the second inequality, since we have already established that the orthogonal series estimator $\hat P_Z$ is nearly minimax, by Theorem~\ref{thm:estimation_harder_than_sampling} it suffices to give a scheme for sampling from the distribution $\hat P_Z(X_{1:n})$. Since the sample space $\X = [0,1]^d$ is bounded and the estimator $\hat P_Z(X_{1:n})$ has a bounded density $p : \X \to [0,\infty)$, we can simply perform rejection sampling; that is, repeatedly sample $Z \times Y$ uniformly from $\X \times [0,\sup_{x \in \X} p(x)]$. Let $Z^*$ denote the first $Z$ sample satisfying $Y < p(Z)$. Then, we $Z^*$ will necessarily have the density $p$.
\end{proof}
\end{example}

\newpage
\begin{example}[Density Estimation and Sampling in Exponential Families under Jensen-Shannon, $\L^q$, Hellinger, and RKHS losses]

Let $\H$ be an RKHS over a compact sample space $\X \subseteq \R^d$, and let
\[\F_G := \left\{ p_f : \X \to [0,\infty) \middle| p_f(x) = e^{f(x) - A(f)} \text{ for all } x \in \X, f \in \H \right\},\]
in which $A(f) := \log \int_\X e^{f(x)} \, d\mu$ denotes the $\log$-partition function.

The Jensen-Shannon divergence $J : \P \times \P \to [0,\infty]$ is defined by
\[J(P, Q)
  := \frac{1}{2} \left(
      D_{KL} \left( P, \frac{P + Q}{2} \right)
      + D_{KL} \left( Q, \frac{P + Q}{2} \right)
    \right),\]
where $\frac{P + Q}{2}$ denotes the uniform mixture of $P$ and $Q$, and, noting that we always have $P \ll \frac{P + Q}{2}$ and $Q \ll \frac{P + Q}{2}$,
\[D_{KL}(P, Q) := \int_\X \log \left( \frac{dP}{dQ} \right) \, dP\]
denotes the Kullback-Leibler divergence. Although $J$ does not satisfy the triangle inequality, one can show that $\sqrt{J}$ is a metric on $\P$~\citep{endres2003new}, and hence, for all $P,Q \in \P$, by Cauchy-Schwarz,
\begin{equation}
J(P,Q)
  = \left( \sqrt{J(P,Q)} \right)^2
  \leq \left( \sqrt{J(P,R)} + \sqrt{J(R,Q)} \right)^2
  \leq 2J(P,R) + 2J(R,Q).
\label{ineq:Jensen_Shannon_bound}
\end{equation}
Also, under mild regularity conditions on $\H$, \citet{sriperumbudur2017exponentialFamilies} (in their Theorem 7) provides uniform convergence guarantees for a particular density estimator over $\P$. Combining this the inequality~\eqref{ineq:Jensen_Shannon_bound}, our Theorem~\ref{thm:M_D_leq_M_I_appendix} implies
\[M_D(\P,J,n) \leq 2M_I(\P,J,n).\]

For the same class $\P$, the convergence results of \citet{sriperumbudur2017exponentialFamilies} (their Theorems 6 and 7) also imply similar guarantees under several other losses, including the parameter estimation loss $\|f_P - f_{\hat P}\|_H$ in the RKHS metric, as well as the $\L_\mu^q$ and Hellinger metrics $H$ (on the density), so that we have $M_D(\P,\rho,n) \leq M_I(\P,\rho,n)$ when $\rho$ is any of these metrics.

Perhaps more interestingly, in the case of Jensen-Shannon divergence, under certain regularity conditions, we can altogether drop the assumption that $P_{\hat X_k(X_{1:n},Z)|X_{1:n}} \in \P$
using uniform convergence bounds shown in Section 5 of \citet{sriperumbudur2017exponentialFamilies} for the mis-specified case; the density estimator described therein converges (uniformly over $P_*$) to the projection $P_*$ of $P_{\hat X_k(X_{1:n},Z)|X_{1:n}}$ onto $\P$ even when samples are drawn from $P_{\hat X_k(X_{1:n},Z)|X_{1:n}}$.

It is also worth pointing out that, when densities in $\F_G$ are additionally assumed to be lower bounded by a positive constant $\kappa > 0$ (i.e.,
\[\kappa := \inf_{p \in \F_G} \inf_{x \in \X} p(x) > 0,\] then, by the inequality $-\log(1 + x) \leq x^2 - x$ that holds for all $x \geq -0.5$, for all densities $p,q \in \F_G$,
\begin{align*}
\int_\X p(x) \log \left( \frac{2p(x)}{p(x) + q(x)} \right) \, dx
& = -\int_\X p(x) \log \left( 1 + \frac{q(x) - p(x)}{2p(x)} \right) \, dx \\
& \leq \int_\X p(x) \left( \left( \frac{q(x) - p(x)}{2p(x)} \right)^2 - \left( \frac{q(x) - p(x)}{2p(x)} \right) \right) \, dx \\
& = \int_\X \frac{\left( q(x) - p(x)\right)^2}{2p(x)} \, dx
  \leq \frac{1}{2\kappa} \|P - Q\|_{\L_\mu^2}^2,
\end{align*}
and, therefore, $J(P,Q) \leq \frac{1}{2\kappa} \|P - Q\|_{\L_\mu^2}$. Thus, under this additional assumption of uniform lower-boundedness, standard results for density estimation under $\L_\mu^2$ apply~\citep{tsybakov2009introduction}.
\end{example}

\newpage
\section{Experimental Results}
\label{sec:experiments}

\begin{wrapfigure}{r}{0.62\textwidth}
    \vspace{-3mm}
    \centering
    \begin{subfigure}[b]{0.30\textwidth}
        \includegraphics[width=\columnwidth,trim={0.4cm 0 0.4cm 0},clip]{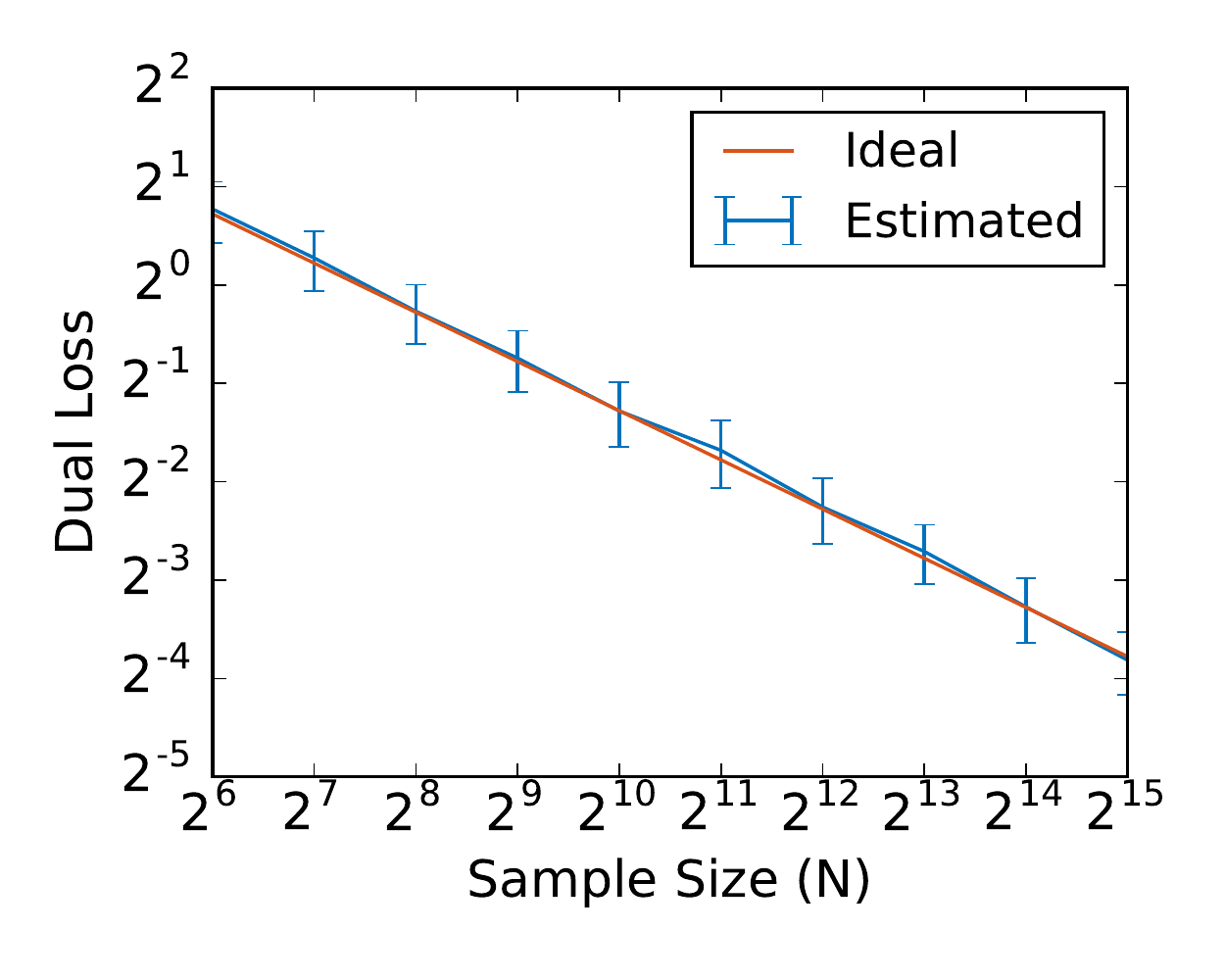}
        \caption{Parametric Regime}
        \label{fig:finite}
    \end{subfigure}
    \begin{subfigure}[b]{0.30\textwidth}
        \includegraphics[width=\columnwidth,trim={0.5cm 0 0.0cm 0},clip]{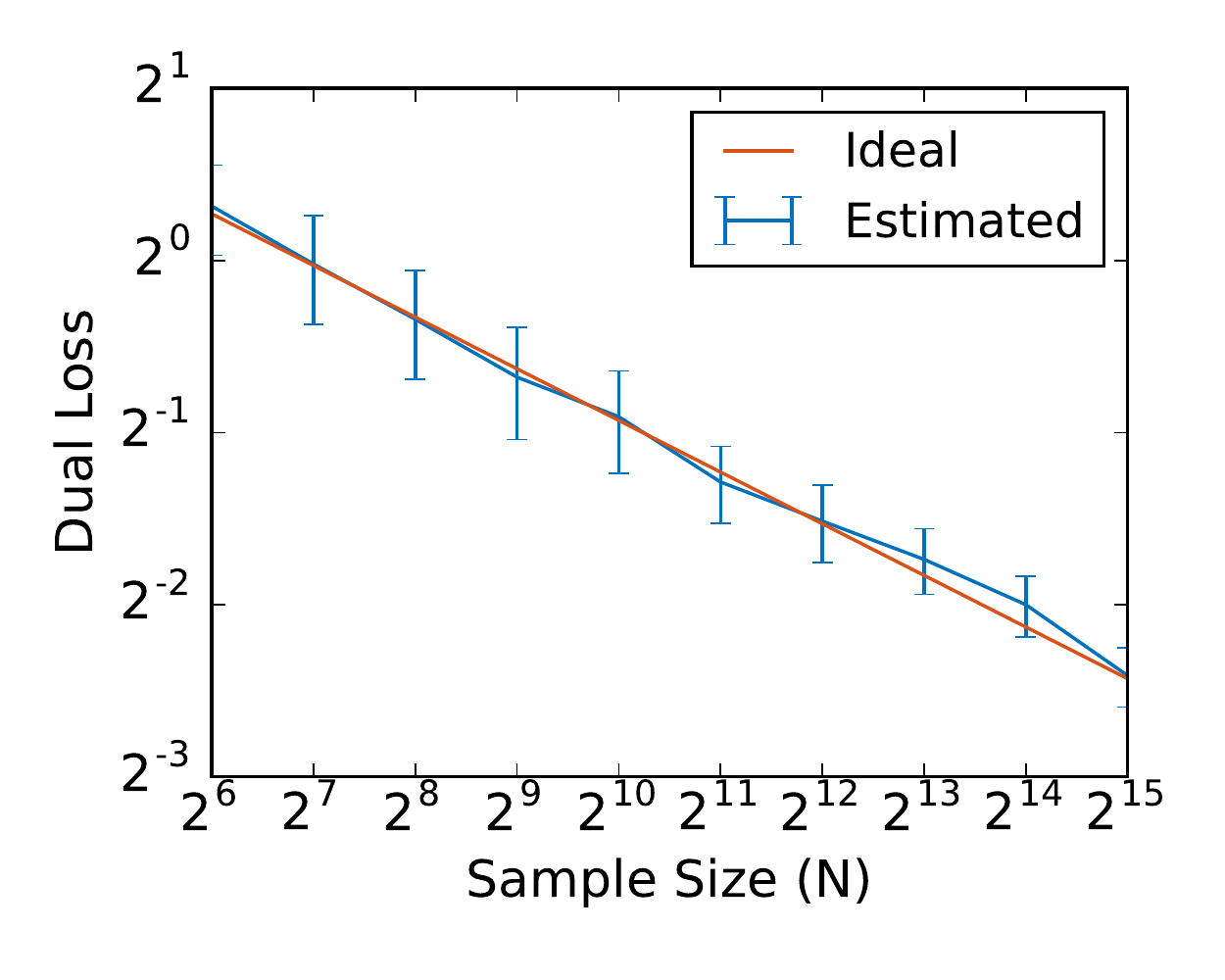}
        \caption{Nonparametric Regime}
        \label{fig:increasing}
    \end{subfigure}
    \caption{Simple synthetic experiment to showcase the tightness of the bound.}
    \label{fig:example}
    \vspace{-5mm}
\end{wrapfigure}
This section presents some empirical results supporting the theoretical bounds above. First, we consider an example with a finite basis, which should yield the parametric $n^{-1/2}$ rate. In particular, we construct the true distribution $P$ to consist of $6$ randomly chosen basis functions in the Fourier basis. We employ the truncated series estimator $\hat{P}$ of \eqref{eq:estimator} in the same basis using different number of samples $n$ and compute the distance $d_{\F_D} \left( P, \hat P \right)$. Under this setting, the maximization problem of \eqref{eq:adversarial_loss} needed to evaluate this distance can be solved in closed form. The risk empirically appears to closely follow our derived minimax rate of $n^{-1/2}$, as shown in Figure~\ref{fig:finite}. Next, we consider a non-parametric case, in which the number of active basis elements increases as function of $n$, weighted such that Inequality~\eqref{eq:Sobolev_minimax_rate} predicts a rate of $n^{-1/3}$. As expected, the estimated risk, shown in Figure~\ref{fig:increasing}, closely resembles the rate of $n^{-1/3}$.

\section{Future Work}


In this paper, we showed that minimax convergence rates for distribution estimation under certain adversarial losses can improve when the probability distributions are assumed to be smooth, using an orthogonal series estimator that smooths the observed empirical distribution.
On the other hand, recent work has also shown that, at least under Wasserstein losses, minimax convergence rates improve when the distribution is assumed to have support of low intrinsic dimension, even within a high-dimensional ambient space~\citep{singh2018wasserstein}.
In any case, further work is needed to understand whether minimax rates further improve when distributions are simultaneously smooth and supported on a set of low intrinsic dimension.
It is easy to see that the empirical distribution does \textit{not} benefit from assumed smoothness (see, e.g., Proposition 6 of \citet{weed2017sharp}).
Whether an orthogonal series estimate benefits from low intrinsic dimension may depend on the basis used; the Fourier basis is not likely to benefit, but a wavelet basis, which is spatially localized, may. Nearest neighbor methods have also been shown to benefit from both smoothness and low intrinsic dimensionality, under $\L_\mu^2$ loss, and may therefore be promising~\citep{kpotufe2013adaptivity}.

The results in this paper should also be generalized to larger classes of spaces, such as inhomogeneous Besov spaces. Over these spaces, the classic work of \citet{donoho1996density} suggests that simple linear density estimators such as the orthogonal series estimator studied in this paper cease to be minimax rate-optimal, but simple non-linear estimators such as wavelet thresholding estimators may continue to be (adaptively) minimax optimal.

The results of \citet{yarotsky2017error}, on uniform approximation of smooth functions (over Sobolev spaces) by neural networks, we crucial to the result Theorem~\ref{thm:GAN_bound} bounding the error of perfectly optimized GANs. If these approximation-theoretic results can be generalized to other spaces (e.g., RKHSs), then our Theorem~\ref{thm:upper_bound} can be used to derive performance bounds for perfectly optimized GANs over these spaces.

Finally, it has been widely observed that, in practice, optimization of GANs can be quite difficult~\citep{nagarajan2017gradientDescentGAN,liang2018interaction,arora2017generalizationInGANs}. This limits the practical implications of our performance bounds on GANs, which assumed perfect optimization (i.e., convergence to a generator-optimal equilibrium). Conversely, most work studying the optimization landscape of GANs is specific to the noiseless (i.e., ``infinite sample size'') case, whereas our lower bounds suggest that the sample complexity of training GANs may be substantial.
Hence, it is important to generalize these statistical results to the case of imperfect optimization, and, conversely, to understand the effects of statistical noise on the optimization procedure.

\end{document}